\documentclass[12pt]{article}
\usepackage[dvipdf]{graphicx}
\usepackage[latin1]{inputenc}
\usepackage{latexsym}
\usepackage{amsmath,amssymb,amsfonts,amsthm}
\usepackage{xcolor}
\usepackage{mathrsfs}
\usepackage{bbm}
\usepackage{bm}
\usepackage{enumitem}

\usepackage[numbers,comma,sort]{natbib}

\definecolor{db}{RGB}{0, 0, 130}
\definecolor{wildstrawberry}{rgb}{1.0, 0.26, 0.64}

\usepackage[colorlinks=true,citecolor=db,linkcolor=black,urlcolor=blue,pdfstartview=FitH]{hyperref}

\usepackage{pdfsync}

\definecolor{rp}{rgb}{0.25, 0, 0.75}
\definecolor{dg}{rgb}{0, 0.5, 0}

\newcommand{\R}{\mathbb{R}}

\newcommand{\EE}{\mathbb{E}}

\newcommand{\PP}{\mathbb{P}}


\makeatletter
\newcommand{\customlabel}[2]{%
   \protected@write \@auxout {}{\string\newlabel {#1}{{#2}{\thepage}{#2}{#1}{}}}%
   \hypertarget{#1}{#2\hspace{-0.14cm}}
}
\makeatother

\numberwithin{equation}{section}

\textheight = 23 cm 
\textwidth = 15 cm 
\footskip = 1 cm
\topmargin = 0 cm 
\headheight = 0 cm 
\headsep =0 cm 
\oddsidemargin= 0.7 cm 
\evensidemargin = 0.7 cm 
\marginparwidth =  3cm 
\marginparsep = 0 cm\topskip = 0 cm

\newtheorem{theorem}{Theorem}[section]

\newtheorem{definition}[theorem]{Definition}

\newtheorem{lemma}[theorem]{Lemma}

\newtheorem{remark}[theorem]{Remark}

\author{ Jesus Correa \footnote{Departamento de Matem\'atica, Universidade Estadual de Campinas, Brazil. \texttt{colivera@ime.unicamp.br}.}
 \and    Christian Olivera \footnote{Departamento de Matem\'atica, Universidade Estadual de Campinas, Brazil. \texttt{colivera@ime.unicamp.br}.} \and 
}

\title{ \Large{\textbf{From particle systems  to the stochastic compressible Navier-Stokes equations of a 
barotropic fluid.}}}
\begin{document}

\maketitle

\begin{abstract}
We propose a mathematical derivation of  stochastic compressible Navier-Stokes equation.
We consider many-particle systems with a Hamiltonian dynamics supplemented by a friction term and environmental noise. Both the interaction potential and the additional friction force are supposed to be long range in comparison with the typical distance between neighboring particles.
It is shown that the empirical measures associated to the position and velocity of the system converge to the solutions of  the stochastic compressible Navier-Stokes equations of a  barotropic fluid. Moreover, we quantify the  distance  between particles and the limit in  suitable 
in Besov and Triebel-Lizorkin spaces.  
\end{abstract}

\date{}

\maketitle

\noindent \textit{ {\bf Key words and phrases:} 
 Stochastic Compressible  Navier-Stokes system, Particle systems,  Ito-Kunita-Wentzell formula, Besov and Triebel-Lizorkin spaces.}

\vspace{0.3cm} \noindent {\bf MSC2010 subject classification:} 60H15, 
 35R60, 60H30, 35Q31
. 

%
%
%
%
%
%
%

Data sharing not applicable to this article as no datasets were generated or analysed during the current study
\section{Introduction}

In this paper we consider stochastic particle system on $\R^{d}, d\geq 2$ , with moderate interaction(\cite{Oes2} and \cite{Oes4}   ) and the 
so-called environmental noise :

\begin{align}\label{Nsrb}
\left\{
\begin{array}{lcl}
\frac{dX_{t}^{k,N}}{dt}&=& V_{t}^{k,N}\\[0.2cm]
d V_{t}^{k,N}&=&-\frac{1}{N}\displaystyle \sum_{l=1 \atop l \neq k}^{N} \nabla \phi_{N}\left(X_{t}^{k,N}-X_{t}^{l,N}\right)dt\\
&&-\frac{1}{N}\displaystyle \sum_{l=1 \atop l \neq k}^{N} \zeta_{N}\left(X_{t}^{k,N}-X_{t}^{l,N}\right)\left(V_{t}^{k,N}-V_{t}^{l,N}\right)dt\\
&&+\sigma\left(X_{t}^{k,N} \right)  \frac{dX^{k,N}}{dt}  \circ dB_{t} \qquad k=1, \ldots, N
\end{array}
\right.
\end{align}

\noindent where   $\{(B_{t}^{q})_{t\in[0,T]}, \; q=1,\cdots,d\}$ is a  standard $\R^d$-valued Brownian motions
  defined on a filtered probability space $\left(\Omega,\mathcal{F},\mathcal{F}_{t},\PP\right) $, and the integration in the sense of Stratonovich.  The interaction potential $\phi_{N}$, see formula   (\ref{defphi}), 
and the matrix-value function $\zeta_{N}$, see formula  (\ref{defzetamatriz}), which determines the friction between different particles, depend on $N$.

In the classical studies of continuum interacting particle systems, where interactions are modulated by a potential, one usually takes
$\phi_{N}=N^{\beta} \phi_{1}(N^{\beta/d}) $
for some smooth function $\phi_{1}$. The case $\beta = 0$ is called mean-field, since all particles interact with each
other at any given time. The case $\beta\in  (0, 1)$ is called moderate, as not all particles interact at
any given time, nevertheless such number is diverging with $N$. The case $\beta = 1$ is called local, as
one would expect that in a neighborhood of radius $N^{-\frac{1}{d}}$ , only a constant number of particles
interact. Of course, here we are assuming that particles are relatively homogeneously distributed
in space at all times down to the microscopic scale. The matrix $\zeta_{N}$  determines the friction term 
between particles, see \cite{Oes3} for physical  arguments.

Our main goal is to derive  rigorously the macroscopic collective models  governing the evolution of the particle system (\ref{Nsrb}) as the number of particles goes to infinity. Therefore, we investigate the empirical processes 

\begin{eqnarray}\label{emppXV}
 S_{t}^{N}:=\dfrac{1}{N}\sum_{k=1}^{N}\delta_{X_{t}^{k,N}}\qquad  V_{t}^{N}:=\dfrac{1}{N}\sum_{k=1}^{N}V_{t}^{k,N}\delta_{X_{t}^{k,N}}\qquad k=1,\cdots, N
\end{eqnarray}

where  $\delta_{a}$, denotes the
Dirac measure at $a$, for any test functions $f\in C_{b}(\R^{d})$ we can define the random  measures
$\displaystyle S_{t}^{N}= \dfrac{1}{N}\sum_{k=1}^{N}   f(X_{t}^{k,N})$ and  $\displaystyle V_{t}^{N}= \dfrac{1}{N}\sum_{k=1}^{N}V_{t}^{k,N} f(X_{t}^{k,N})$. The random measures $S_{t}^{N}$ and $V_{t}^{N}$ determine the distribution
of the positions and the velocities in the $Nth$ system. 
We shall show that $S_{t}^{N}$ and $V_{t}^{N}$ converge as $N\rightarrow\infty$ to 
$\rho$ and $\rho \upsilon$ where $(\rho,\upsilon)$ is the solution of 
a  stochastic systems  for a compressible, viscous barotropic flow. More precisely, we show the convergence to  the following  stochastic systems 
of the continuity equation and Navier-Stokes equation 
\begin{align}\label{NV}
\left\{
\begin{array}{lc}
d\varrho=-\operatorname{div}_{x}(\varrho\upsilon)dt
\\[0.3cm]
d(\varrho\upsilon_{q})=\Bigl(-\dfrac{1}{2}\nabla^{q}(\varrho^{2})-\operatorname{div}_{x}(\varrho\upsilon_{q}\upsilon)+\frac{1}{2}\nabla^{q}\left(\varrho^{2}\operatorname{div}_{x} \upsilon\right)\Bigr.\\
\qquad\quad\qquad\Bigl.+\displaystyle\frac{1}{2}\sum_{i=1}^{d}\nabla^{i}\left(\varrho^{2}\left[\nabla^{i} \upsilon_{q}+\nabla^{q} \upsilon_{i}\right]\right)\Bigr)dt+ \varrho \sigma_{q}(x)\upsilon_{q}\circ dB_{t}^{q}\quad q=1,\cdots, d.
\end{array}
\right.
\end{align}

Applying   to  $\varrho\upsilon_{q}$  the  Ito formula for the product of  semimartingale 
and  using the equation that   verifies  $\varrho$,  we can show  that the system  (\ref{NV}) is equivalent to

\begin{align}\label{NVSst}
\left\{
\begin{array}{lc}
d\varrho=-\operatorname{div}_{x}(\varrho\upsilon)dt
\\[0.3cm]
d\upsilon_{q}=\left(-v \cdot \nabla \upsilon_{q}-\nabla^{q} \varrho+\frac{1}{2 \varrho} \nabla^{q}\left(\varrho^{2} \operatorname{div}_{x} v\right)+\frac{1}{2\varrho} \displaystyle\sum_{i=1}^{d} \nabla^{i}\left(\varrho^{2}\left[\nabla^{i} \upsilon_{q}+\nabla^{q} \upsilon_{i}\right]\right)\right)dt\\
\qquad\qquad\qquad+ \sigma_{q}( x) \upsilon_{q} \circ dB_{t}^{q}\qquad q=1,\cdots, d
\end{array}
\right.
\end{align}

where the pressure is $p=\frac{1}{2}\varrho^{2}.$  \\

The problem of deriving the  deterministic and stochastic hydrodynamic equations has been studied over the last century 
by various approaches, and the aim of this paper is to  present the first macroscopic derivation of the stochastic compressible Navier  equation. Our proof involves several tools, for example :   Ito-Kunita-Wentzell formula, stochastic  maximal inequality, commutator estimates, Taylor expansion   and some ideas in  the paper \cite{Oes3} where the  same equation   without noise  is considered.  We use several elements of the proof it gives in   \cite{Oes3}, however in that paper the author  assume the existence of a  function $W^{N}$ such that  $W^{N}(X_{t}^{i,N})=V_{t}^{i,N}$,  
the existence of $W^N$ is simply the mono-kinetic regime, that means,  if particles share the same position, then their velocities should be the same as well. The existence of such function is used throughout of the paper \cite{Oes3}, we abolished the use of this function in our proof.

 The mathematical analysis of the coupled system (\ref{NVSst})  has received much attention in the past years, we refer to the reader to \cite{Breit}, \cite{Smith}, \cite{Smith2}, \cite{Tor} and \cite{Tor2}.

\paragraph{ Some related Works.}
The literature on propagation of chaos and fluid dynamic equations consists of a number of works,
including   \cite{FlandoliOliveraSimon, FHM, Gui, JabinWang,Marra,  Marchioro, Meleard3, Osada}. The literature on particle with  common noise   remains limited ,  for systems with uniformly Lipschitz interaction coefficients \cite{Cogui} established conditional propagation of chaos,  the entropy method has recently been explored for systems with common noise, as shown in \cite{Shao} for incompressible  the Navier-Stokes equations, \cite{Chen2} for the Hegselmann-Krause model, and \cite{Niko} for mean-field systems with bounded kernels.For more results , see \cite{Maurelli}, \cite{Cogui}, \cite{correa}  and  \cite{Kotelenez}.

In the following we will focus only on the results of moderately interacting systems  :  in  \cite{Oelschlager84} Oelschlager introduced and studied moderately interacting particle systems which are used to obtain local non-linear partial differential equations. The article \cite{Oelschlager84}  is part of a series of works by the author on this subject (see also \cite{Oes2,Oes4}), the convergence results was improved  in \cite{JourdainMeleard} and \cite{Meleard}.  In the recent paper \cite{FlandoliLeimbachOlivera} was  developed a semigroup approach which enables them to show uniform convergence of mollified empirical measures,  see \cite{FlandoliLeocata,Leocata, Live, FlandoliOliveraSimon,Simon} for further applications of
this method.  Recently, in \cite{Pisa} and \cite{ORT} the authors developed new quantitative  estimates for classes of moderately interacting particle systems,  by using a semi-group approach, this approach 
was improvements  in \cite{Hao},  \cite{Knorst} and   \cite{Simon2}. About more advances in  moderate particle systems,  see for instance  \cite{Ansgar}, \cite{Chen}, \cite{correa},  \cite{Ansgar2}, and \cite{Steve}.

Finally,  we want to emphasize that our particle systems is very close to particle method (SPH)
 to approximated PDE. This method was first introduced in  \cite{Lucy} and \cite{Mona}  to simulate fluids in astrophysics,and it became popular due toits robustness and easy implementation. Due its simplicity, the method has many applications in multiple fields, for example in hydrodynamics, bioengineering or astrophysics. A detailed list of applications can be found in \cite{Liu} and \cite{Sha}.  However there are a few papers which already deal with the convergence  of the SPH method, see \cite{Dil}, \cite{Dil2}, \cite{Franz},  \cite{Oes} and \cite{Oes3}.

\subsection{Heuristic deduction}

By change rule, for any function  $f\in C_{b}^{1}(\R^{d})$, we have

\begin{equation}\label{f(XN)N-S}
d(f(X_{t}^{k,N}))=\nabla f(X_{t}^{k,N})d(X_{t}^{k,N})=\left(\nabla f(X_{t}^{k,N})\cdot V_{t}^{k,N}\right) dt.
\end{equation}

Then we have
\begin{eqnarray}\label{eq1d.N-S}
\langle S_{t}^{N},f\rangle&=&\langle S_{0}^{N},f\rangle+\int_{0}^{t}\langle V_{t}^{N},\nabla f\rangle ds.
\end{eqnarray}
On other hand, we obtain 

\begin{align*}
\left( S_{t}^{N}\ast \phi_{N}\right)(x)&=\left(\dfrac{1}{N}\sum_{l=1}^{N}\delta_{X_{t}^{l,N}}\ast\phi_{N}\right)(x)\\
&=\dfrac{1}{N}\sum_{l=1}^{N}\phi_{N}(x-X_{t}^{l,N}).
\end{align*}

We observe that the system \eqref{Nsrb} is equivalent to the  system:

\begin{align}\label{NSVN}
\left\{
\begin{array}{lcl}
d X_{t}^{k,N}&=& V^{k,N} dt\\[0.3cm]
d\left(V_{t}^{k,N}\right)&=&-\nabla\left(\mathbf  S_{t}^{N}\ast \phi_{N}\right)\left(X_{t}^{k,N}\right)dt\\[0.3cm]
&&-\frac{1}{N}\displaystyle \sum_{l=1 \atop l \neq k}^{N} \zeta_{N}\left(X_{t}^{k,N}-X_{t}^{l,N}\right)\left(V_{t}^{k,N}-V_{t}^{l,N}\right)dt\\[0.3cm]
&&+\sigma (X_{t}^{k,N})  V_{t}^{k,N} \circ d B_{t}\qquad k=1,\cdots, N.
\end{array}
\right.
\end{align}

Applying the Ito formula  for  the product  to  \eqref{NSVN} and  \eqref{f(XN)N-S} we have 
\begin{align*}
d(V_{t,q}^{k,N}f(X_{t}^{k,N}))=&f(X_{t}^{k,N})\circ d(V_{t,q}^{k,N})+V_{t,q}^{k,N}\circ d(f(X_{t}^{k,N}))\\
    =&-f(X_{t}^{k,N})\nabla^{q}\left( S_{t}^{N}\ast\phi_{N}\right)(X_{t}^{k,N})))dt\\
    &-\frac{1}{N} \sum_{l=1 \atop l \neq k}^{N} f(X_{t}^{k,N})\zeta_{N,q*}\left(X_{t}^{k,N}-X_{t}^{l,N}\right)\left(V_{t}^{k,N}-V_{t}^{l,N}\right)dt\\
    &+f(X_{t}^{k,N})\sigma (X_{t}^{k,N})  V_{t,q}^{k,N} \circ d B_{t}^{q}+ V_{t,q}^{k,N}\nabla f(X_{t}^{k,N})\cdot V_{t}^{k,N} dt
\end{align*}
or , equivalently,
\begin{align*}
&f(X_{t}^{k,N})V_{t,q}^{k,N}-f(X_{0}^{k,N})V_{0,q}{k,N}=-\int_{0}^{t}f(X_{s}^{k,N})\nabla^{j}\left( S_{t}^{N}\ast\phi_{N}\right)(X_{s}^{k,N})ds\\
&-\int_{0}^{t}\frac{1}{N} \sum_{l=1 \atop l \neq k}^{N} f(X_{s}^{k,N})\zeta_{N,q*}\left(X_{s}^{k,N}-X_{s}^{l,N}\right)\left(V_{s}^{k,N}-V_{s}^{l,N}\right)ds\\
& \quad+\int_{0}^{t}f(X_{s}^{k,N})\sigma (X_{s}^{k,N})  V_{s,q}^{k,N} \circ d B_{s}^{q}+\int_{0}^{t}V_{s,q}^{k,N}\nabla f(X_{s}^{k,N})\cdot V_{s}^{k,N}ds.
\end{align*}

Thus we have 
\begin{align}\label{EnuN-S}
&\frac{1}{N}\sum_{k=1}^{N}f(X_{t}^{k,N})V_{t,q}^{k,N}-f(X_{0}^{k,N})V_{0,q}{k,N}\nonumber\\
=&-\int_{0}^{t}\frac{1}{N}\sum_{k=1}^{N}f(X_{s}^{k,N})\nabla^{j}\left( S_{t}^{N}\ast\phi_{N}\right)(X_{s}^{k,N})ds\nonumber\\
&-\int_{0}^{t}\frac{1}{N^{2}}\sum_{k=1}^{N}\sum_{l=1 \atop l \neq k}^{N} f(X_{s}^{k,N})\zeta_{N,q*}\left(X_{s}^{k,N}-X_{s}^{l,N}\right)\left(V_{s}^{k,N}-V_{s}^{l,N}\right)ds\nonumber\\
&+\int_{0}^{t}\frac{1}{N}\sum_{k=1}^{N}f(X_{s}^{k,N})\sigma (X_{s}^{k,N})  V_{s,q}^{k,N} \circ d B_{s}^{q}+\int_{0}^{t}\frac{1}{N}\sum_{k=1}^{N}V_{s,q}^{k,N}\nabla f(X_{s}^{k,N})\cdot V_{s}^{k,N}ds.
\end{align}

Let us now introduce formally  an $\R^{d}$-valued  function $\nu^{N}$
which satisfies $\nu^{N}(X_{t}^{k,N}, t) = V_{t}^{k,N}$,$ k = 1,\cdots,N$. Obviously, such a
function can be defined only if different particles cannot occupy the same position
at the same instant. From \eqref{EnuN-S} we deduce

\begin{align}\label{eq2d.N-S}
\langle S_{t}^{N},f \nu_{q}^{N}(\cdot,t)\rangle=&\langle S_{0}^{N},f\nu_{q}^{N}(\cdot,0)  \rangle -
\int_{0}^{t}\langle S_{s}^{N},f\nabla^{q} \left(S_{s}^{N}\ast\phi_{N}\right)\rangle ds\nonumber\\
&-\int_{0}^{t}\langle S_{s}^{N},f(\cdot)\int_{\R^{d}}\zeta_{N,q*}\left(\cdot-y\right)\left(\nu^{N}(\cdot,s)-\nu^{N}(y,s)dS_{s}^{N}(y)\right)\rangle ds\nonumber\\
&+\int_{0}^{t}\langle S_{s}^{N},\nu_{q}^{N}(\cdot,s) \nabla f\cdot \nu^{N}(\cdot,s)\rangle ds+\int_{0}^{t}\langle S_{s}^{N},f \sigma(\cdot)\nu_{q}^{N}   \rangle\circ dB_{s}^{q}\\
&\qquad\qquad\qquad\qquad\qquad\qquad\qquad\qquad\qquad\quad q=1,\cdots,d.\nonumber
\end{align}

Formally, we now assume that for any fixed t the limits
$ \lim_{N\rightarrow \infty} S_{t}^{N}=\rho(\cdot,t)$ , $ \lim_{N\rightarrow \infty} \nu^{N}(\cdot,t)=\upsilon(\cdot,t)$
exist in some sense, by  Taylor expansion, in space variable, we obtain

\begin{align*}
&\lim _{N \rightarrow \infty} \int_{\mathbb{R}^d}\sum_{q^{\prime}=1}^a \zeta_{N, qq^{\prime}}(x-y)\left(\nu_{q^{\prime}}^{N}(x, s)-\nu_{q^{\prime}}^{N}(y, s)\right)dS_{t}^{N}\nonumber\\
=&\lim _{N \rightarrow \infty} \int_{\mathbb{R}^d} \rho(y, s) \sum_{q^{\prime}=1}^d \zeta_{N, qq^{\prime}}(x-y)\left(\upsilon_{q^{\prime}}(x, s)-\upsilon_{q^{\prime}}(y, s)\right)dy\nonumber \\
=&\lim _{N \rightarrow \infty} \int_{\mathbb{R}^d}  \sum_{q^{\prime}=1}^d\left\{\rho(x, s)+\sum_{i=1}^d\left(y_i-x_i\right) \nabla^{j} \rho(x, s)\right\}\nonumber\\ &\qquad\times\dfrac{N^{\gamma(d+4)/d}}{(2\pi)^{d/2}}\left(x_{q}-y_{q}\right)\left(x_{q^{\prime}}-y_{q^{\prime}}\right)e^{-(N^{\gamma/d}|x-y|)^{2}/2}\nonumber\\
&\qquad\times \left\{\sum_{j=1}^d\left(x_j-y_j\right) \nabla^{j} \upsilon_{q^{\prime}}(x, s)-\frac{1}{2} \sum_{i, j=1}^d\left(x_i-y_i\right)\left(x_j-y_j\right) \nabla^{j} \nabla^{j} \upsilon_{q^{\prime}}(x, s)\right\}dy\nonumber \\
=&-2\rho(x, s) \sum_{i=1} \nabla^{j} \nabla^{j} \upsilon_{q}(x, s)- \rho(x, s) \nabla^{q} \nabla^{q} \upsilon_{q}(x, s) -\rho(x, s) \sum_{\substack{i=1 \\
i \neq q}}^d \nabla^{q} \nabla^{j} v_i(x, s)\\
&- \sum_{i=1}^d \nabla^{j} \rho(x, s) \nabla^{j} \upsilon_{q}(x, s)-2\nabla^{q} \rho(x, s) \nabla^{q} \upsilon_{q}(x, s)\\
&-\sum_{\substack{i=1 \\
i \neq q}}^d\left(\nabla^{q} \rho(x, s) \nabla^{j} v_i(x, s)+\nabla^{j} \rho(x, s) \nabla^{q} v_i(x, s)\right) \\
=&  -\frac{1}{2\rho(x, s)} \nabla^{q}\left(\rho(x, s)^2 \sum_{i=1}^d \nabla^{j} v_i(x, s)\right)\nonumber\\
&-\frac{1}{2\rho(x, s)} \sum_{i=1}^d \nabla^{j}\left(\rho(x, s)^2\left[\nabla^{j} \upsilon_{q}(x, s)+\nabla^{q} v_i(x, s)\right]\right),
\end{align*}

where the constant coefficients of the previous equation are determined by

\begin{align*}
\dfrac{N^{\gamma(d+4)/d}}{(2\pi)^{d/2}}\int_{\R^{d}}x^{\alpha}x_{q}x_{q^{\prime}}& e^{-(N^{\gamma/d}|x|)^{2}/2}dx\\
&=N^{\gamma(d+4)/d}i^{-(|\alpha|+2)}\dfrac{\partial^{\alpha}}{\xi^{\alpha}}\nabla^{q}\nabla^{q^{\prime}}\left(\dfrac{1}{N^{\gamma}}e^{-(N^{-\gamma/d}|\xi|)^{2}/2}\right)(0)\\
&=(-1)^{|\alpha|+1}i^{|\alpha|}N^{4\gamma/d}\frac{\partial^{\alpha}}{\xi^{\alpha}}\nabla^{q}\nabla^{q^{\prime}}\left(e^{-(N^{-\gamma/d}|\xi|)^{2}/2}\right)(0),
\end{align*}

Thus, by taking the limit when $N\to\infty$ in the equations \eqref{eq1d.N-S} and \eqref{eq2d.N-S} as $N\to\infty$, we can formally deduce  the system of equations (\ref{NV}).

\section{ Definitions,  preliminaries and hypothesis.}

\subsection{Space of functions }

Let us first define a dyadic partition of unity, as follows: we consider two 
$C_{0}^{\infty}(\R^{d})$-functions $\chi$ 
 and $\varphi$  which take values in $[0,1]$ and satisfy the following: there exists $\lambda \in (1,\sqrt 2)$ such that
\[
\mathrm{Supp}\; \chi=\left\{   |\xi |\leq\lambda  \right\} \qquad \text{and} \qquad \mathrm{Supp}  \ \varphi=\left\{  \frac{1}{\lambda} \leq|\xi|\leq\lambda  \right\}.
\]
Moreover, with the following notations,  
\[
\varphi_{-1}(\xi):=\chi(\xi), \qquad
\varphi_{i}(\xi):=\varphi(2^{-i}\xi), \ \text{for any } i\geq 0,
\]
the sequence $\{\varphi_i\}$ satisfies
\begin{align*}
& \mathrm{Supp} \ \varphi_{i}\cap \mathrm{Supp} \ \varphi_{j}=\emptyset	 \quad \text{ if } \ |i-j|>1,  \\
& \sum_{i\geq-1} \varphi_{i}(\xi)=1, \quad \text{ for any } 
x \in\R^{d}.
\end{align*}

 Take a fixed dyadic partition of unity $\{ \varphi_{i} \}$ with its inverse Fourier transforms 
$  \{  \check{\varphi}_{i}   \}$. 
For $u\in \mathcal{S}'(\R^{d})$
, the non-homogeneous dyadic  blocks are defined  as
\[
\Delta_{i}\equiv 0, \quad \text{ if } i< -1, \qquad \text{and} \qquad 
\Delta_{i}u=\check{\varphi}_{i}\ast u, \quad \text{ if  }  i\geq -1.
\]
The partial sum of dyadic blocks is defined as a non-homogeneous low frequency cut-off
operator:
\[
S_{j}:=\sum_{i\leq j-1} \Delta_{i}. 
\]

Now, we introduce the Triebel-Lizorkin and Besov Space.

\begin{definition}  Let $s\in \R$ and $1 <r\leq\infty$. 

\begin{enumerate}

\item  If $0< p< \infty$, then

\[
F_{p,r}^{s}= \left\{  f\in \mathcal{S}^{\prime}(\R^{d}) : \    \big\| \| (2^{js} \Delta_{j}f)_{j \in \mathbb{Z} } \|_{l^r(\mathbb{Z})}  \big\|_{\mathit{L}^{p}(\R^{d})}< \infty \right\}.
\]

\item If $0< p\leq \infty$, then

\[
B_{p,r}^{s}= \left\{  f\in \mathcal{S}^{\prime}(\R^{d}) : \  \big\| (2^{js}\| \Delta_{j}f \|_{L^p} )_{j \in \mathbb{Z} } \big\|_{\mathit{l}^{r}(\mathbb{Z})}< \infty \right\},
\]
\end{enumerate}

\end{definition} 

where $l^{r}(\mathbb{Z}^{d})= \left\{ \{ a_{n}\}_{n\in \mathbb{Z}^{d} } \ with \  a_{n}\in \mathbb{R} \ 
 : \| \{ a_{n}  \}_{n\in \mathbb{Z}^{d}} \}\|^{p}= \sum_{n\in \mathbb{Z}^{d} } | a_{n} |^{p}< \infty     \right\} $.
 
The spaces $F_{p,r}^{s}$  and $B_{p,r}^{s}$are independent of the dyadic partition  chosen, see \cite{Triebel}. We denoted 
$E_{p,r}^{s}=F_{p,r}^{s}$ or $E_{p,r}^{s}=B_{p,r}^{s}$ with $q\geq 2$. We recall the Sobolev embedding

\begin{equation}\label{Sobolev}
E_{p,r}^{s}\subset C_{b}^{s-\frac{d}{p}}
\end{equation}

if $s> \frac{d}{p}$, see p. 203 of \cite{Triebel}.

We denoted $C_{b}(\R^{d}; \R^{d})$  is the space of bounded continuous $\R^{d}$-valued functions on $\R^{d}$.

\subsection{Stochastic calculus }
We recall to help the intuition, the following definitions 

$$
\begin{aligned}
\text{Ito:}&  \ \int_{0}^{t} X_s dB_s=
\lim_{n    \rightarrow \infty}   \sum_{t_i\in \pi_n, t_i\leq t}  X_{t_i}    (B_{t_{i+1} \wedge t} - B_{t_i}),
\\[5pt]
\text{Stratonovich:}&  \ \int_{0}^{t} X_s  \circ dB_s=
\lim_{n    \rightarrow \infty}   \sum_{t_i\in \pi_n, t_i\leq t} \frac{ (X_{t_i \wedge t   } + X_{t_i} ) }{2} (B_{t_{i+1} \wedge t} - B_{t_i}),
\\[5pt]
\text{Covariation:}& \ [X, Y ]_t =
\lim_{n    \rightarrow \infty}   \sum_{t_i\in \pi_n, t_i\leq t} (X_{t_i \wedge t   } - X_{t_i} )  (Y_{t_{i+1} \wedge t} - Y_{t_i}),
\end{aligned}
$$
where $\pi_n$ is a sequence of finite partitions of $ [0, T ]$ with size $ |\pi_n| \rightarrow 0$ and
elements $0 = t_0 < t_1 < \ldots  $. The limits are in probability, uniformly in time
on compact intervals. Details about these facts can be found in Kunita 
\cite{Ku2}. 

 Now, we recall  the Ito-Kunita-Wentzell formula, see  Theorem 8.3 of \cite{Ku2}. 
We consider $X(t,x, \omega)$ be a continuous  $C^{3}$-process and 
continuous  $C^{2}$-semimartingale , for $x\in \R^{d}$, and $t\in [0,T]$ of the form 

\[
  X(t,x,\omega)=X_{0}(x) + \int_{0}^{t} f(s,x,\omega) ds  + \int_{0}^{t} g(s,x,\omega) \circ dB_s. 
 \]

If $Y(t,\omega)$   is  a continuous semimartingale, then $X(t,Y_{t},\omega)$
is a continuous semimartingale and the following formula holds

\[
  X(t,Y_{t},\omega)=X_{0}(Y_{0}) + \int_{0}^{t} f(s,Y_{s},\omega) ds  + \int_{0}^{t} g(s,Y_{s},\omega) \circ dB_s
\]

\[
+ \int_{0}^{t} (\nabla_{x}X)(s,Y_{s},\omega)\circ dY_{s}.
\]

\subsection{Definition of solution }

  \begin{definition}
	Let  $\varrho_{0}\in E_{p,r}^{s}\cap L^{1}(\R^{d})$ with $\varrho_{0}>0$, and $\upsilon_{0}\in  E_{p,r}^{s}$ with   $s>\frac{d}{p}+2$, $1<p<\infty$, 
$1<r< \infty$. Then  $(\varrho, \upsilon)$   is called a solution of (\ref{NVSst}) if the following conditions are satisfied

\begin{enumerate}
    \item[(i)] $\left(\varrho, \upsilon\right)$ is a $E_{p,r}^{s} \cap L^{1}(\R^{d}) \times E_{p,r}^{s} $-valued right-continuous progressively measurable process and $\rho>0$;
\item[(ii)] $\tau$ is a stopping time with respect to $\left(\mathfrak{F}_{t}\right)$ such that $\mathbb{P}$-a.s.
$$
\tau(w)=\lim _{m \rightarrow \infty} \tau_{m}(w)
$$
where
$$
\tau_{m}=\inf \left\{0 \leq t<\infty:\left\|\left(\varrho, \upsilon\right)(t)\right\|_{E_{p,r}^{s}} \geq m\right\}
$$
with the convention that $\tau_{m}=\infty$ if the set above is empty;
\item[(iii)] there holds $\mathbb{P}$-a.s.
$$
   (\varrho, \upsilon)   \in C\left([0, \tau_{m}(\omega)] ; E_{p,r}^{s} \cap L^{1}(\R^{d})  \times   E_{p,r}^{s} \right)
$$
as well as
\begin{align*}
\varrho\left(t \wedge \tau_{m}\right)=&\varrho_{0}-\int_{0}^{t \wedge \tau_{m}} \operatorname{div}_{x}(\varrho\upsilon)\mathrm{d} s, \\
\upsilon_{q}\left(t \wedge \tau_{m}\right)=&\upsilon_{q,0}-\int_{0}^{t \wedge \tau_{m}} \left(\frac{1}{\varrho} \nabla^{q}p+\upsilon  \nabla  \upsilon_{q} \right)\mathrm{d} s\\
&+\int_{0}^{t \wedge \tau_{m}}\frac{1}{2\varrho} \sum_{i=1}^{d} \nabla^{i}\left(\varrho^{2}\left[\nabla^{i} \upsilon_{q}+\nabla^{q} \upsilon_{i}\right]\right)ds 
+\int_{0}^{t \wedge \tau_{m}} \sigma_{q} \upsilon_{q} \circ dB_{t}^{q}
\end{align*}
for all $q=1,\cdots, d$, all $t \in[0, T]$ and all $m\geq 1$.
\end{enumerate}

  \end{definition}

\begin{remark} On existence and uniqueness  of system of the form  (\ref{NVSst})
we refer to the reader to \cite{Breit}, \cite{Smith}, \cite{Smith2}, \cite{Tor} and \cite{Tor2}. 
\end{remark}

\begin{remark} We recall that by the method of characteristics $\varrho= \varrho_{0}(X_{t}^{-1})$, where $X_{t}$ is the flow 
with drift $\upsilon$, that is, $X_{t}=x+ \int_{0}^{t}\upsilon(t,X_{t}) \  dt$. Therefore,  if $\varrho_{0}>0$  then $\varrho>0$. 
\end{remark}

\begin{remark} We observe that by Sobolev embedding \\  $(\varrho, \upsilon)   \in C\left([0, \tau_{m}(\omega)] ;  C_{b}^{2} \cap L^{1}(\R^{d})  \times   C_{b}^{2}(\R^{d}) \right)$. Then the solutions are classical in the space variable.  The existence of global solutions in time  and explosion criterion are   difficult problems even in the deterministic case, see for instance \cite{Hoff}, \cite{Hoff2} and \cite{Hoff3}. 
The stochastic case will be a topic of future research.
\end{remark}

\subsection{ Technical hypothesis}

Next, we suppose that $\phi_{1}$ can be written as a convolution product 

 $$\phi_{1}=\phi_{1}^{r}\ast\phi_{1}^{r}$$

 where  $\phi_{1}^{r}\in C_{b}^{2}(\R^{d})$  and it is symmetric probability density.

\begin{definition}
For all  $q\in\{1,\cdots,d\}$  we define the function 
\begin{eqnarray*}
U_{1;\alpha}^{q}:\R^{d}&\longrightarrow&\R\\
x&\longmapsto&(-1)^{1+|\alpha|} \dfrac{x^{\alpha}}{\alpha!} \nabla^{q}\phi_{1}^{r}(x)
\end{eqnarray*}
where  $0\leq|\alpha|\leq L+1$ with $L:=\left[\frac{d+2}{2}\right]$.
\end{definition}

We assume that functions $\phi_{1}^{r}$ and $U_{1;\alpha}^{q}$ satisfy

\begin{eqnarray}\label{c1}
|\phi_{1}^{r}(x)|\leq\frac{C}{1+|x|^{d+2}}\qquad |x|\geq1,
\end{eqnarray}
\begin{eqnarray}\label{gradiendtetranforfouphi1r}
    \nabla\widetilde{\phi_{1}^{r}}\in L^{\infty}(\R^{d};\R^{d}),
\end{eqnarray}
\begin{eqnarray}\label{limite inferiorphi1r}
    exp(-|\lambda|^{2}/4)\leq C|\widetilde{\phi_{1}^{r}}(\lambda)|,
\end{eqnarray}
\begin{eqnarray}\label{cotawildeu}
|\widetilde{U^{q}_{1;\alpha}}(\lambda)|\leqq C|\widetilde{\phi_{1}^{r}}(\lambda)|,\qquad\text{para } 0\leq|\alpha|\leq L
\end{eqnarray}
\begin{eqnarray}\label{cotauj}
|U^{q}_{1;\alpha}(x)|\leqq C \left(\frac{1}{1+|x|^{d+1}}\right)^{1/2},\qquad\text{para } |\alpha|=L+1.\
\end{eqnarray}

Here, the notation 

\[
\widetilde{f}(\lambda)= (2\phi)^{-d/2} \int_{\R^{d}} f(x) \exp{(i\lambda x)} \ dx,
\]

for the Fourier transform. 

We set

\begin{eqnarray}\label{defphi}
  \phi_N(x) =N^\beta \phi_1\left(N^{\beta / d} x\right), \quad 0<\beta<1
  \end{eqnarray}
\begin{eqnarray}\label{defzetamatriz}
\zeta_{N, i j}(x)= x_{i} x_{j} N^{4 \gamma / d} \psi_N(x), i, j=1, \ldots, d,, \quad 0<\gamma<\frac{2 \beta}{3 d+8} , 
\end{eqnarray}

where 
\begin{equation}\label{defexp}
\psi_{1}(x)=(2\pi)^{-d/2}e^{-|x|^{2}/2}\quad\text{and}\quad\psi_N(x)=N^\gamma \psi_1\left(N^{\gamma / d} x\right).
 \end{equation}

   On the existence of the function  $\phi_{1}(x)$ we refer to 
   \cite{Oes3}. We also assume that $\sigma\in C_{b}^{1}( \R^{d}, \R^{d})$.

\section{Result}

First we present a simple lemma. 

\begin{lemma}
We assume \eqref{c1}, then there exist $C^{\prime}>0$ such that
    \begin{eqnarray}\label{cotafNbeta}
\Vert f-(f\ast\phi_{N}^{r})\Vert_{\infty}&\leq&C^{\prime}N^{-\beta/d}\Vert\nabla f\Vert_{\infty}\qquad\forall f\in C_{b}^{1}(\R^{d}).
\end{eqnarray}
\end{lemma}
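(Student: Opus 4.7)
The plan is to write the difference $f - f\ast\phi_N^r$ as an integral against the probability density $\phi_N^r$, use a first-order Taylor bound on the increments of $f$, and then exploit the scaling of $\phi_N^r$ together with the decay hypothesis \eqref{c1} to extract the factor $N^{-\beta/d}$.

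First I would record that, since $\phi_1^r$ is a symmetric probability density, the rescaled function $\phi_N^r(x) = N^{\beta}\phi_1^r(N^{\beta/d}x)$ (which is the unique rescaling that makes $\phi_N^r\ast\phi_N^r = \phi_N$ with the definition \eqref{defphi}) is also a probability density, i.e.\ $\int_{\R^d}\phi_N^r(y)\,dy = 1$. This allows me to write, for every $x\in\R^d$,
\[
f(x) - (f\ast\phi_N^r)(x) \;=\; \int_{\R^d}\bigl(f(x) - f(x-y)\bigr)\phi_N^r(y)\,dy.
\]
Next I would apply the mean value theorem (or the identity $f(x) - f(x-y) = \int_0^1 \nabla f(x - ty)\cdot y\,dt$) to get $|f(x)-f(x-y)| \leq |y|\,\Vert\nabla f\Vert_{\infty}$, which yields the pointwise bound
\[
|f(x) - (f\ast\phi_N^r)(x)| \;\leq\; \Vert\nabla f\Vert_{\infty}\int_{\R^d}|y|\,\phi_N^r(y)\,dy.
\]

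The remaining step is a scaling computation: substituting $z = N^{\beta/d}y$,
\[
\int_{\R^d}|y|\,\phi_N^r(y)\,dy \;=\; N^{-\beta/d}\int_{\R^d}|z|\,\phi_1^r(z)\,dz.
\]
Here I would invoke hypothesis \eqref{c1}, which gives $|\phi_1^r(z)|\leq C(1+|z|^{d+2})^{-1}$ for $|z|\geq 1$, so that $|z|\,\phi_1^r(z)$ decays like $|z|^{-d-1}$ at infinity; combined with the fact that $\phi_1^r\in C_b^2(\R^d)$ is bounded near the origin, this ensures $M := \int_{\R^d}|z|\,\phi_1^r(z)\,dz < \infty$. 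Setting $C' = M$ then gives the claimed estimate.

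There is no real obstacle here: the argument is just a quantitative regularization estimate for convolution with an approximation of the identity. The only point that needs to be checked carefully is the finiteness of the first moment of $\phi_1^r$, which is precisely what assumption \eqref{c1} is designed to guarantee; otherwise the proof is a one-line Taylor expansion followed by a rescaling of variables.
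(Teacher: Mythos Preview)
Your proof is correct and is the standard argument for this kind of quantitative approximation-of-identity estimate. The paper itself does not give a proof here but simply refers to \cite{Correa}; your write-up is almost certainly what that reference contains, since there is essentially only one natural way to prove such a bound: use that $\phi_N^r$ integrates to one, apply the mean value inequality, and rescale to extract $N^{-\beta/d}$, with hypothesis \eqref{c1} guaranteeing the finiteness of the first moment of $\phi_1^r$.
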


\begin{proof} See \cite{correa}.
\end{proof}

We define the  empirical energy 
 
\begin{equation}\label{QN}
Q_{t}^{N}:=\frac{1}{N}\sum_{k=1}^{N}|V_{t}^{k,N}-\upsilon(X_{t}^{k,N},t)|^{2} +\Vert S_{t}^{N}\ast\phi_{N}^{r}-\varrho(.,t)\Vert_{L^{2}(\R^{d})}^{2}.
\end{equation}

We assume that

\begin{equation}\label{paramdelta}
    \delta \in\left(\frac{\gamma}{d}(d+4), \frac{\beta}{d} \wedge \frac{2}{d}(\beta-\gamma(d+ 2 ))\right).
\end{equation}

\begin{theorem}\label{tomeuler}
Let $s\geq  ( \delta d/2\lambda ) +4+d/p$ and $\alpha>\frac{d}{2}+1$. We assume  \eqref{c1}-\eqref{defexp}
and

\begin{equation}\label{condi,ini.Q0N}
    \lim_{N\to\infty}N^{2\delta}\mathbb{E} \big( Q_{0}^{N} \big)^{2}=0.
\end{equation}

Then we have

\begin{equation*}
\lim _{N \rightarrow \infty}  N^{2\delta} \mathbb{E} \big( \sup_{[0, T]} Q_{t\wedge  \tau_{m}  }^{N}  \big) ^{2}  =0,
\end{equation*}

\begin{equation*}
\lim _{N \rightarrow \infty}  N^{2\delta} \mathbb{E} \sup_{[0, T]} \|S_{t\wedge  \tau_{m} }^{N}- \varrho_{t \wedge  \tau_{m} } \|_{E_{2,\tilde{r}}^{-\alpha}}^{2}=0
\end{equation*}

and

\begin{equation*}
\lim _{N \rightarrow \infty}  N^{2\delta} \mathbb{E} \sup_{[0, T]} \|V_{t\wedge  \tau_{m} }^{N}- (\varrho \upsilon)_{t \wedge  \tau_{m} } \|_{E_{2,\tilde{r}}^{-\alpha}}^{2},
=0. \end{equation*}

where $\frac{1}{r}+ \frac{1}{\tilde{r}}=1$.

\end{theorem}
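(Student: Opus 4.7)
The plan is to derive an It\^o-type evolution equation for $Q_t^N$, close it by a stochastic Gronwall argument up to $\tau_m$, and then translate the resulting $L^2$-type estimate into the negative-regularity convergences via duality together with the embedding \eqref{Sobolev}. Write $Q_t^N = A_t^N + B_t^N$ with $A_t^N := N^{-1}\sum_k |V_t^{k,N} - \upsilon(X_t^{k,N},t)|^2$ and $B_t^N := \|S_t^N \ast \phi_N^r - \varrho_t\|_{L^2}^2$. For $A_t^N$, I apply the It\^o--Kunita--Wentzell formula to $|V_t^{k,N} - \upsilon(X_t^{k,N},t)|^2$, using the SDE \eqref{Nsrb} for $V_t^{k,N}$ and evaluating the SPDE \eqref{NVSst} for $\upsilon$ at $X_t^{k,N}$. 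The key observation is that the Stratonovich noises nearly cancel: the particle contribution is $\sigma_q(X^{k,N})V_q^{k,N}\circ dB^q$, while the SPDE contribution at $X^{k,N}$ is $\sigma_q(X^{k,N})\upsilon_q(X^{k,N},t)\circ dB^q$, so the difference is $\sigma_q(X^{k,N})(V_q^{k,N}-\upsilon_q(X^{k,N},t))\circ dB^q$, yielding a martingale with quadratic variation controlled by $A_t^N$ itself plus a Stratonovich correction of the same form. This is precisely where \cite{Oes3} invokes the auxiliary interpolant $W^N$; the use of It\^o--Kunita--Wentzell evades that construction.

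Next I treat the interaction drift. Using $\phi_1 = \phi_1^r \ast \phi_1^r$ and \eqref{defphi}, the discrete force rewrites as $[(S_t^N \ast \phi_N^r)\ast \nabla\phi_N^r](X_t^{k,N})$, and comparison with $\nabla(\varrho^2/2)(X_t^{k,N})$ produces, after summing against $V^{k,N}-\upsilon(X^{k,N},t)$, a cross-term which combines with a corresponding term from the evolution of $B_t^N$ into a bilinear form controlled by $Q_t^N$, plus a commutator error of order $N^{-\beta/d}$ from the preceding lemma. For the friction drift, the matrix $\zeta_N$ in \eqref{defzetamatriz} is tailored so that a Taylor expansion of $\upsilon$ around $X^{k,N}$ to order $L+1$ with $L = [(d+2)/2]$, combined with the Fourier estimates \eqref{cotawildeu}--\eqref{cotauj} for $U^q_{1;\alpha}$, reproduces the viscous operator $\varrho^{-1}\sum_i \nabla^i(\varrho^2[\nabla^i \upsilon_q + \nabla^q \upsilon_i])$ of \eqref{NVSst} up to a remainder of size $N^{-(2/d)(\beta-\gamma(d+2))}$; the pointwise bound \eqref{cotauj} for $|\alpha|=L+1$ is exactly what lets the top-order Taylor remainder be absorbed in $L^2$.

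For $B_t^N$, the identity $dS_t^N = -\operatorname{div}(V_t^N)\,dt$ (obtained by testing the first line of \eqref{Nsrb} against a smooth function) combined with the continuity equation in \eqref{NVSst} gives, via It\^o,
\begin{align*}
dB_t^N = 2\bigl\langle S_t^N \ast \phi_N^r - \varrho_t,\; -\operatorname{div}(V_t^N\ast\phi_N^r) + \operatorname{div}(\varrho_t\upsilon_t)\bigr\rangle dt.
\end{align*}
Decomposing $V_t^N\ast\phi_N^r - \varrho_t\upsilon_t = (S_t^N\ast\phi_N^r - \varrho_t)\,\upsilon_t + N^{-1}\sum_k \phi_N^r(\cdot - X^{k,N})(V^{k,N}-\upsilon(X^{k,N},t)) + \text{commutator}$ and applying Cauchy--Schwarz bounds this by $C_m(A_t^N + B_t^N)$ plus an $N^{-\beta/d}$ error. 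Summing with the $A_t^N$-equation, the leading bilinear cross-terms between interaction drift and density coupling cancel in sign, producing
\begin{align*}
Q_{t\wedge\tau_m}^N \leq Q_0^N + C_m\int_0^{t\wedge\tau_m} Q_s^N\, ds + R_t^N + M_t^N,
\end{align*}
with $R_t^N$ deterministic and $M_t^N$ a local martingale satisfying $[M^N]_t \leq C_m\int_0^t Q_s^N\,ds$. Stochastic Gronwall together with Burkholder--Davis--Gundy then delivers the first claim, conditional on $\EE \sup_{[0,T]}|R_t^N|^2 = o(N^{-2\delta})$.

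The main obstacle is this last remainder estimate: the three competing error scales $N^{-\beta/d}$ (convolution commutator), $N^{-(2/d)(\beta-\gamma(d+2))}$ (friction Taylor remainder) and $N^{-\gamma(d+4)/d}$ (viscous scaling entering through \eqref{defzetamatriz} and \eqref{defexp}) must each beat $N^{-\delta}$, which is precisely the content of the admissible window \eqref{paramdelta}. Once the first limit holds, the negative-regularity statements follow by duality: for a test function $\psi$ in the dual space,
\begin{align*}
\langle S_t^N - \varrho_t, \psi\rangle = \langle S_t^N\ast\phi_N^r - \varrho_t, \psi\rangle + \langle S_t^N, \psi - \psi\ast\phi_N^r\rangle + \langle \varrho_t, \psi\ast\phi_N^r - \psi\rangle,
\end{align*}
where the first summand is bounded in $L^2$-duality by $\sqrt{B_t^N}\,\|\psi\|_{L^2}$ and the remaining two by $N^{-\beta/d}\|\nabla\psi\|_\infty$ via the preceding lemma; since $\alpha > d/2+1$ the embedding \eqref{Sobolev} ensures $\|\nabla\psi\|_\infty \lesssim \|\psi\|_{E_{2,r}^\alpha}$. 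The parallel argument for $V_t^N - \varrho_t\upsilon_t$ estimates the discrete functional $N^{-1}\sum_k \psi(X^{k,N})(V^{k,N}-\upsilon(X^{k,N},t))$ by $\sqrt{A_t^N}\,\|\psi\|_\infty$ via Cauchy--Schwarz, yielding the remaining two limits.
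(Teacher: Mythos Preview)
Your overall architecture---It\^o--Kunita--Wentzell for $A_t^N$, the continuity equation for $B_t^N$, then Gronwall plus duality---matches the paper's. But there is a genuine gap in the friction analysis that invalidates the linear Gronwall closure you claim.

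The friction drift does \emph{not} close linearly in $Q_t^N$. When you compare the discrete friction $N^{-1}\sum_l \zeta_N(X^{k,N}-X^{l,N})(V^{k,N}-V^{l,N})$ with the continuous viscous operator, the empirical measure $S_t^N$ appears in place of $\varrho$, and the discrepancy carries a factor $\|\nabla^q\nabla^{q'}(B_N\ast\psi_N)\|_\infty$ with $B_N = \varrho - S_t^N$. The Fourier bound (using \eqref{limite inferiorphi1r} to dominate the Gaussian by $\widetilde{\phi_N^r}$) gives only
\[
\|\nabla^q\nabla^{q'}(B_N\ast\psi_N)\|_\infty \leq C\,N^{\gamma(d+4)/2d}\bigl(\|S_t^N\ast\phi_N^r-\varrho\|_{L^2}+N^{-\beta/d}\bigr),
\]
so after multiplication by $A_t^N$ one obtains a term of order $N^{\gamma(d+4)/2d}(Q_t^N)^{3/2}$. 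This nonlinearity cannot be absorbed into $C_m Q_t^N$: the prefactor $N^{\gamma(d+4)/2d}$ diverges. The paper handles this by introducing the stopping time $T_N^\ast=\inf\{t:Q_t^N\geq N^{-\delta}\}\wedge T$, which on $[0,T_N^\ast]$ converts $(Q_t^N)^{3/2}$ into $N^{-3\delta/2}$ and hence the nonlinear term into the constant error $N^{\gamma(d+4)/d-3\delta}$; one then runs Gronwall on $[0,T_N^\ast]$ and argues by contradiction that $T_N^\ast=T$ for $N$ large. This entire step is absent from your outline, and your ``stochastic Gronwall'' cannot substitute for it.

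Relatedly, your reading of the parameter window \eqref{paramdelta} is inverted. You list $N^{-\gamma(d+4)/d}$ as an error scale that must beat $N^{-\delta}$, but since $\delta>\gamma(d+4)/d$ one has $N^{-\gamma(d+4)/d}\gg N^{-\delta}$, so that inequality goes the wrong way. The lower bound on $\delta$ is not there to control a small remainder; it is there precisely so that, after the stopping-time substitution, $N^{2\delta}\cdot N^{\gamma(d+4)/d-3\delta}=N^{\gamma(d+4)/d-\delta}\to 0$. Finally, a minor misattribution: the hypotheses \eqref{cotawildeu}--\eqref{cotauj} concern $U_{1;\alpha}^q$, which are built from $\phi_1^r$ and enter the \emph{interaction} potential estimates (your $A_N(1,t)$), not the friction; the friction analysis relies instead on the explicit Gaussian identities $N^{4\gamma/d}x_ix_j\psi_N=\nabla^i\nabla^j\psi_N+N^{2\gamma/d}\delta_{ij}\psi_N$ and the factorisation $\psi_N=\psi_N^r\ast\psi_N^r$. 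Your duality step at the end is correct and matches the paper.
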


\begin{remark}
  We observe by hypotheses  $s>( \delta d/2\lambda ) +4 + d/p$, thus   by 
 by Sobolev embedding $(\varrho, \upsilon)   \in C\left([0, \tau_{m}(\omega)] ;  C_{b}^{4} \cap L^{1}(\R^{d})  \times   C_{b}^{4}(\R^{d}) \right)$. 
 Then for all $m$ there exist a constant  $C_{m}$ such that $\| \varrho \|_{C([0,\tau_{m}], C_{b}^{4}(\R^{d}) )}\leq C_{m} $ and $\| \varrho\|_{C([0,\tau_{m}], C_{b}^{4}(\R^{d}) )}|\leq C_{m} $, hence
 $\varrho_{t\wedge \tau_{m}} $ and  $\upsilon_{t\wedge \tau_{m}}$ are   $C^{4}$-semimartingales. Therefore,  we can apply Ito-Kunita-Wentzell formula to  $ \upsilon_{t\wedge \tau_{m}} (X_{t}^{k,N})$. 
 
\end{remark}

\begin{proof}

We denoted $V_{t,q}^{k,N},\upsilon_{q} $ the $q$-coordinate of $V_{t}^{k,N}$ and $\upsilon $
respectively. During the proof we use conveniently   the  Einstein's summation convention. 
In order not to load the notation during a large part of the proof, we use $t$ for $t\wedge \tau_{m}$. First we observe that 

\begin{align*}
Q_{t}^{N}=&   \frac{1}{N}\sum_{k=1}^{N}|V_{t}^{k,N}-\upsilon(X_{t}^{k,N},t)|^{2} +\left\| S_{t}^{N}\ast \phi_{N}^{r}-\varrho(., t)\right\|_{L^{2}(\R^{d})}^{2}\\
=&\dfrac{1}{N}\sum_{k=1}^{N}\vert V_{t}^{k,N}\vert^{2}-\dfrac{2}{N}\sum_{k=1}^{N} V_{t}^{k,N}\cdot\upsilon(X_{t}^{k,N},t)\\
&+\dfrac{1}{N}\sum_{k=1}^{N}\vert\upsilon(X_{t}^{k,N},t)\vert^{2}+\Vert  S_{t}^{N}\ast\phi_{N}^{r}\Vert_{L^{2}(\R^{d})}^{2}-2\langle   S_{t}^{N}\ast\phi_{N}^{r},\varrho(.,t)\rangle\\
&+\Vert\varrho(.,t)\Vert_{L^{2}(\R^{d})}^{2}\\
=&\dfrac{1}{N}\sum_{k=1}^{N}\vert V_{t}^{k,N}\vert^{2}-\dfrac{2}{N}\sum_{k=1}^{N} V_{t}^{k,N}\cdot\upsilon(X_{t}^{k,N},t)\\
&+\dfrac{1}{N}\sum_{k=1}^{N}\vert\upsilon(X_{t}^{k,N},t)\vert^{2}+\dfrac{1}{N^{2}}\sum_{k,l=1}^{N}\phi_{N}(X_{t}^{k,N}-X_{t}^{l,N})\\
&-\dfrac{2}{N}\sum_{k=1}^{N}\left(\varrho(.,t)\ast\phi_{N}^{r}\right)(X_{t}^{k,N})+\Vert\varrho(.,t)\Vert_{L^{2}(\R^{d})}^{2}.
\end{align*}

We will calculate  the  Stratonovich differential of each term in the sum of $Q_{t}^{N}$. By Ito formula we deduce

\begin{align*}
d\left(\dfrac{1}{N}\sum_{k=1}^{N}\vert V_{t}^{k,N}\vert^{2}\right)=&\dfrac{1}{N}\sum_{k=1}^{N}d\left(\vert V_{t}^{k,N}\vert^{2}\right)=\dfrac{1}{N}\sum_{k=1}^{N}2V_{t,q}^{k,N}\circ d(V_{t,q}^{k,N})\\
=&-\dfrac{2}{N}\sum_{k=1}^{N}V_{t,q}^{k,N}\nabla^{q}\left( S_{t}^{N}\ast\phi_{N}\right)(X_{t}^{k,N})dt\\
&-\dfrac{2}{N^{2}}\sum_{k=1}^{N}V_{t,q}^{k,N}\sum_{l=1 \atop l \neq k}^{N} \zeta_{N,qq^{\prime}}\left(X_{t}^{k,N}-X_{t}^{l,N}\right)\left(V_{t,q^{\prime}}^{k,N}-V_{t,q^{\prime}}^{l,N}\right)dt\\
&+\dfrac{2}{N}\sum_{k=1}^{N}V_{t,q}^{k,N}\sigma_{q}(X_{t}^{k,N})V_{t,q}^{k,N}    \circ d B_{t}^{q},
\end{align*}

where $q, q^{\prime}=1,..,d$ and we used  Einstein's summation convention. Applying Ito formula  for the product and
 Ito-Kunita-Wentzell formula  we  have 

\begin{align}\label{ito-wwent}
&d\left(-\dfrac{2}{N}\sum_{k=1}^{N} V_{t}^{k,N}\cdot\upsilon(X_{t}^{k,N},t)\right)=-\dfrac{2}{N}\sum_{k=1}^{N}d[ V_{t,q}^{k,N}\upsilon_{q}(X_{t}^{k,N},t)]\\
=&-\dfrac{2}{N}\sum_{k=1}^{N}\upsilon_{q}(X_{t}^{k,N},t)\circ d(V_{t,q}^{k,N})-\dfrac{2}{N}\sum_{k=1}^{N}V_{t,q}^{k,N}\circ d(\upsilon_{q}(X_{t}^{k,N},t))\nonumber\\
=&\dfrac{2}{N}\sum_{k=1}^{N}\upsilon(X_{t}^{k,N},t)\cdot\nabla\left( S_{t}^{N}\ast\phi_{N}\right)(X_{t}^{k,N})dt\nonumber\\
&+\dfrac{2}{N^{2}}\sum_{l,k=1}^{N}\upsilon(X_{t}^{k,N},t)\cdot \zeta_{N}\left(X_{t}^{k,N}-X_{t}^{l,N}\right)\left(V_{t}^{k,N}-V_{t}^{l,N}\right)dt\nonumber\\
&+\dfrac{2}{N}\sum_{k=1}^{N}V_{t,q}^{k,N}\upsilon(X_{t}^{k,N},t)\cdot\nabla\upsilon_{q}(X_{t}^{k,N},t)dt+\dfrac{2}{N}\sum_{k=1}^{N}V_{t}^{k,N}\cdot\nabla\varrho(X_{t}^{k,N},t)dt\nonumber\\
&-\dfrac{2}{N}\sum_{k=1}^{N}\frac{1}{2}V_{t}^{k,N}\cdot\nabla\left(\varrho^{2}\operatorname{div}_{x} \upsilon\right)(X_{t}^{k,N},t)dt\nonumber\\
&-\dfrac{2}{N}\sum_{k=1}^{N}V_{t,q}^{k,N}\frac{1}{2\varrho(X_{t}^{k,N},t)} \sum_{i=1}^{d} \nabla^{i}\left(\varrho^{2}\left[\nabla^{i} \upsilon_{q}+\nabla^{q} \upsilon_{i}\right]\right)(X_{t}^{k,N},t)dt\nonumber\\
&-\dfrac{2}{N}\sum_{k=1}^{N}V_{t,q}^{k,N}\nabla\upsilon_{q}(X_{t}^{k,N},t)\cdot V_{t}^{k,N}dt\nonumber\\
&-\dfrac{2}{N}\sum_{k=1}^{N}\upsilon_{q}(X_{t}^{k,N},t) \sigma_{q}(X_{t}^{k,N})V_{t,q}^{k,N} \circ dB_{t}^{q}\nonumber\\
&-\dfrac{2}{N}\sum_{k=1}^{N}V_{t,q}^{k,N}\sigma_{q}(X_{t}^{k,N})\upsilon_{q}(X_{t}^{k,N},t)  \circ dB_{t}^{q}\nonumber.
\end{align}

Applying Ito formula and Ito-Kunita-Wentzell formula  we obtain

\begin{align*}
&d\left(\dfrac{1}{N}\sum_{k=1}^{N}\vert\upsilon(X_{t}^{k,N},t)\vert^{2}\right)=\dfrac{2}{N}\sum_{k=1}^{N}\upsilon_{q}(X_{t}^{k,N},t)\circ d(\upsilon_{q}(X_{t}^{k,N},t))\\
=&-\dfrac{2}{N}\sum_{k=1}^{N}\upsilon_{q}(X_{t}^{k,N},t)\upsilon(X_{t}^{k,N},t)\cdot\nabla\upsilon_{q}(X_{t}^{k,N},t)dt-\dfrac{2}{N}\sum_{k=1}^{N}\upsilon(X_{t}^{k,N},t)\cdot\nabla\varrho(X_{t}^{k,N},t)dt\\
&+\dfrac{2}{N}\sum_{k=1}^{N}\frac{1}{2}\upsilon(X_{t}^{k,N},t)\cdot\nabla\left(\varrho^{2}\operatorname{div}_{x} \upsilon\right)(X_{t}^{k,N},t)dt\\
&+\dfrac{2}{N}\sum_{k=1}^{N}\upsilon_{q}(X_{t}^{k,N},t)\frac{1}{2\varrho(X_{t}^{k,N},t)} \sum_{i=1}^{d} \nabla^{i}\left(\varrho^{2}\left[\nabla^{i} \upsilon_{q}+\nabla^{q} \upsilon_{i}\right]\right)(X_{t}^{k,N},t)dt\\
&+\dfrac{2}{N}\sum_{k=1}^{N}\upsilon_{q}(X_{t}^{k,N},t)\nabla\upsilon_{q}(X_{t}^{k,N},t)\cdot V_{t}^{k,N}dt\\
&+\dfrac{2}{N}\sum_{k=1}^{N}\upsilon_{q}(X_{t}^{k,N},t)\sigma_{q}(X_{t}^{k,N})\upsilon_{q}(X_{t}^{k,N},t) \circ dB_{t}^{q}.
\end{align*}

By Leibniz rule and simple calculation we deduce

\begin{align*}
&d\left(\dfrac{1}{N^{2}}\sum_{k,l=1}^{N}\phi_{N}(X_{t}^{k,N}-X_{t}^{l,N})\right)=\dfrac{1}{N^{2}}\sum_{k,l=1}^{N}d\left(\phi_{N}(X_{t}^{k,N}-X_{t}^{l,N})\right)\\
&=\dfrac{1}{N^{2}}\sum_{k,l=1}^{N}\nabla^{q}\phi_{N}(X_{t}^{k,N}-X_{t}^{l,N}) \left(V_{t,q}^{k,N}-V_{t,q}^{l,N} \right)dt\\
&=\dfrac{1}{N}\sum_{k=1}^{N}\left(\dfrac{1}{N}\sum_{l=1}^{N}\nabla^{q}\phi_{N}(X_{t}^{k,N}-X_{t}^{l,N})\right)\ V_{t,q}^{k,N}dt\\
&\quad-\dfrac{1}{N}\sum_{l=1}^{N}\left(\dfrac{1}{N}\sum_{k=1}^{N}\nabla^{q}\phi_{N}(X_{t}^{k,N}-X_{t}^{l,N})\right) V_{t,q}^{l,N}dt \\
&=\dfrac{1}{N}\sum_{k=1}^{N}\left(\dfrac{1}{N}\sum_{l=1}^{N}\nabla^{q}\phi_{N}(X_{t}^{k,N}-X_{t}^{l,K})\right) V_{t,q}^{k,N}dt\\
&\quad-\dfrac{1}{N}\sum_{l=1}^{N}\left(-\dfrac{1}{N}\sum_{k=1}^{N}\nabla^{q}\phi_{N}(X_{t}^{l,N}-X_{t}^{k,N})\right) V_{t,q}^{l,N}dt \\
&=\dfrac{2}{N}\sum_{k=1}^{N}\left(\dfrac{1}{N}\sum_{l=1}^{N}\nabla\phi_{N}(X_{t}^{k,N}-X_{t}^{l,N})\right)\cdot V_{t}^{k,N}dt\\
&=\dfrac{2}{N}\sum_{k=1}^{N}V_{N}^{k}\cdot\nabla\left(S_{t}^{N}\ast\phi_{N}\right)(X_{t}^{k,N})dt.\\
\end{align*}

By Leibniz rule  we arrive at

\begin{align*}
&d\left(-\dfrac{2}{N}\sum_{k=1}^{N}\left(\varrho(.,t)\ast\phi_{N}^{r}\right)(X_{t}^{k,N})\right)
=-\dfrac{2}{N}\sum_{k=1}^{N}d\left(\left(\varrho(.,t)\ast\phi_{N}^{r}\right)(X_{t}^{k,N})\right)\\
&=-\dfrac{2}{N}\sum_{k=1}^{N}d\left(\int\varrho(x,t)\phi_{N}^{r}(x-X_{t}^{k,N})dx\right)\\
&=-\dfrac{2}{N}\sum_{k=1}^{N}V_{t}^{k,N}\cdot\nabla\left(\varrho(.,t)\ast\phi_{N}^{r}\right)(X_{t}^{k,N})dt-\dfrac{2}{N}\sum_{k=1}^{N}\left( d\varrho(.,t)\ast\phi_{N}^{r}\right)(X_{t}^{k,N})dt\\
&=-\dfrac{2}{N}\sum_{k=1}^{N}V_{t}^{k,N}\cdot\nabla\left(\varrho(.,t)\ast\phi_{N}^{r}\right)(X_{t}^{k,N})dt+\dfrac{2}{N}\sum_{k=1}^{N}\left(\operatorname{div}_{x}(\varrho(\cdot,t)\upsilon(\cdot,t))\ast\phi_{N}^{r}\right)(X_{t}^{k,N})dt,
\end{align*}

and 
\begin{align*}
d\left(\Vert\varrho(.,t)\Vert_{2}^{2}\right)=&-2\langle\varrho(\cdot,t),\operatorname{div}_{x}(\varrho(\cdot,t)\upsilon(\cdot,t))\rangle dt.
\end{align*}
From  lemma  \ref{Ito-correction}  we have 
\begin{align}\label{stochNs3}
& \dfrac{-4}{N}\sum_{k=1}^{N} \upsilon_{q}(X_{t}^{k,N},t) \sigma_{q}(X_{t}^{k,N}) V_{t,q}^{k,N}\circ dB_{t}^{q}\nonumber\\
&= \dfrac{-4}{N}\sum_{k=1}^{N} \upsilon_{q}(X_{t}^{k,N},t) \sigma_{q}(X_{t}^{k,N}) V_{t,q}^{k,N} dB_{t}^{q}-  \dfrac{4}{N}\sum_{k=1}^{N} \upsilon_{q}(X_{t}^{k,N},t) |\sigma_{q}(X_{t}^{k,N})|^{2} V_{t,q}^{k,N} dt,\nonumber\\
\end{align}

\begin{align}\label{stochNs1}
&\dfrac{2}{N}\sum_{k=1}^{N} V_{t,q}^{k,N}  \sigma_{q}(X_{t}^{k,N}) V_{t,q}^{k,N} \circ d B_{t}^{q}\nonumber\\
&=\dfrac{2}{N}\sum_{k=1}^{N} V_{t,q}^{k,N}  \sigma_{q}(X_{t}^{k,N}) V_{t,q}^{k,N}  d B_{t}^{q}+ \dfrac{2}{N}\sum_{k=1}^{N} |V_{t,q}^{k,N}|^{2}  |\sigma_{q}(X_{t}^{k,N})|^{2}  dt,
\end{align}
and 
\begin{align}\label{stochNs2}
&\dfrac{2}{N}\sum_{k=1}^{N}  \upsilon_{q}(X_{t}^{k,N},t) \sigma_{q}(X_{t}^{k,N}) \upsilon_{q}(X_{t}^{k,N},t)  \circ dB_{t}^{q}\nonumber\\
&=\dfrac{2}{N}\sum_{k=1}^{N} \upsilon_{q}(X_{t}^{k,N},t) \sigma_{q}(X_{t}^{k,N}) \upsilon_{q}(X_{t}^{k,N},t) dB_{t}^{q}+ \dfrac{2}{N}\sum_{k=1}^{N} |\upsilon_{q}(X_{t}^{k,N},t)|^{2} |\sigma_{q}(X_{t}^{k,N})|^{2} dt.
\end{align}

From  \eqref{stochNs3}, \eqref{stochNs1} and  \eqref{stochNs2} we obtain 

\begin{align*}
& dQ_{t}^{N}=-\dfrac{2}{N}\sum_{k=1}^{N}V_{t,q}^{k,N}\nabla^{q}\left( S_{t}^{N}\ast\phi_{N}\right)(X_{t}^{k,N})dt\\
&-\dfrac{2}{N^{2}}\sum_{k=1}^{N}V_{t,q}^{k,N}\sum_{l=1 \atop l \neq k}^{N} \zeta_{N,qq^{\prime}}\left(X_{t}^{k,N}-X_{t}^{l,N}\right)\left(V_{t,q^{\prime}}^{k,N}-V_{t,q^{\prime}}^{l,N}\right)dt\\
&+\dfrac{2}{N}\sum_{k=1}^{N}\upsilon(X_{t}^{k,N},t)\cdot\nabla\left( S_{t}^{N}\ast\phi_{N}\right)(X_{t}^{k,N})dt\\
&+\dfrac{2}{N^{2}}\sum_{l,k=1}^{N}\upsilon(X_{t}^{k,N},t)\cdot \zeta_{N}\left(X_{t}^{k,N}-X_{t}^{l,N}\right)\left(V_{t}^{k,N}-V_{t}^{l,N}\right)dt\\
&+\dfrac{2}{N}\sum_{k=1}^{N}V_{t,q}^{k,N}\upsilon(X_{t}^{k,N},t)\cdot\nabla\upsilon_{q}(X_{t}^{k,N},t)dt+\dfrac{2}{N}\sum_{k=1}^{N}V_{t}^{k,N}\cdot\nabla\varrho(X_{t}^{k,N},t)dt\\
&-\dfrac{2}{N}\sum_{k=1}^{N}\frac{1}{2}V_{t}^{k,N}\cdot\nabla\left(\varrho^{2}\operatorname{div}_{x} \upsilon\right)(X_{t}^{k,N},t)dt\\
&-\dfrac{2}{N}\sum_{k=1}^{N}V_{t,q}^{k,N}\frac{1}{2\varrho(X_{t}^{k,N},t)} \sum_{i=1}^{d} \nabla^{i}\left(\varrho^{2}\left[\nabla^{i} \upsilon_{q}+\nabla^{q} \upsilon_{i}\right]\right)(X_{t}^{k,N},t)dt\\
&-\dfrac{2}{N}\sum_{k=1}^{N}V_{t,q}^{k,N}\nabla\upsilon_{q}(X_{t}^{k,N},t)\cdot V_{t}^{k,N}dt-\dfrac{2}{N}\sum_{k=1}^{N}\upsilon_{q}(X_{t}^{k,N},t)\upsilon(X_{t}^{k,N},t)\cdot\nabla\upsilon_{q}(X_{t}^{k,N},t)dt\\
&-\dfrac{2}{N}\sum_{k=1}^{N}\upsilon(X_{t}^{k,N},t)\cdot\nabla\varrho(X_{t}^{k,N},t)dt+\dfrac{2}{N}\sum_{k=1}^{N}\frac{1}{2}\upsilon(X_{t}^{k,N},t)\cdot\nabla\left(\varrho^{2}\operatorname{div}_{x} \upsilon\right)(X_{t}^{k,N},t)dt\\
&+\dfrac{2}{N}\sum_{k=1}^{N}\upsilon_{q}(X_{t}^{k,N},t)\frac{1}{2\varrho(X_{t}^{k,N},t)} \sum_{i=1}^{d} \nabla^{i}\left(\varrho^{2}\left[\nabla^{i} \upsilon_{q}+\nabla^{q} \upsilon_{i}\right]\right)(X_{t}^{k,N},t)dt\\
&+\dfrac{2}{N}\sum_{k=1}^{N}\upsilon_{q}(X_{t}^{k,N},t)\nabla\upsilon_{q}(X_{t}^{k,N},t)\cdot V_{t}^{k,N}dt+\dfrac{2}{N}\sum_{k=1}^{N}V_{t}^{k,N}\cdot\nabla\left( S_{t}^{N}\ast\phi_{N}\right)(X_{t}^{k,N})dt\\
&-\dfrac{2}{N}\sum_{k=1}^{N}V_{t}^{k,N}\cdot\nabla\left(\varrho(.,t)\ast\phi_{N}^{r}\right)(X_{t}^{k,N})dt+\dfrac{2}{N}\sum_{k=1}^{N}\left(\operatorname{div}_{x}(\varrho(\cdot,t)\upsilon(\cdot,t))\ast\phi_{N}^{r}\right)(X_{t}^{k,N})dt\\
&-2\langle\varrho(\cdot,t),\operatorname{div}_{x}(
\varrho(\cdot,t)\upsilon(\cdot,t))\rangle dt+\dfrac{2}{N}\sum_{k=1}^{N} |\sigma_{q}(X_{t}^{k,N})|^{2}  |V_{t,q}^{k,N}-\upsilon_{q}(X_{t}^{k,N},t) |^{2}  dt\nonumber\\   \\
&+\dfrac{2}{N}\sum_{k=1}^{N} \sigma_{q}(X_{t}^{k,N})  |V_{t,q}^{k,N}-\upsilon_{q}(X_{t}^{k,N},t) |^{2}  dB_{t}^{q} \\
=&\sum_{j=1}^{19}D_{N}(j,t)dt  + D_{N}(20,t)dB_{t}^{q}.
\end{align*}

From now on we denote by $C>0$ a generic constant which may change line to line, and which may depend  on $m$. We observe that

\begin{eqnarray}\label{cotaAN0}
D_{N}(1,t)+D_{N}(15,t)=0 \nonumber\\
\end{eqnarray}

In exactly the same way as  in \cite{correa} and \cite{Oes}  we have

\begin{eqnarray}\label{cotaAN1}
A_{N}(1,t)&:=&D_{N}(3,t)+D_{N}(11,t)+D_{N}(17,t)+D_{N}(18,t) \nonumber\\
&\leq & C_{1}\left(\Vert S_{t}^{N}\ast\phi_{N}^{r}-\varrho(.,t)\Vert_{L^{2}(\R^{d})}^{2}+N^{-\beta/d}\right),  
\end{eqnarray}

\begin{eqnarray}\label{cotaAN2}
A_{N}(2,t)&:=&D_{N}(5,t)+D_{N}(9,t)+D_{N}(14,t)+D_{N}(10,t)\nonumber\\
&  \leq & C\frac{1}{N}\sum_{k=1}^{N}\Big|V_{t}^{k,N}-\upsilon(X_{t}^{k,N},t)\Big|^{2},
\end{eqnarray}

\begin{eqnarray}\label{cotaAN3}
A_{N}(3,t)&:=& D_{N}(6,t)+D_{N}(16,t)\nonumber\\
&\leqq&C_{3}\left(N^{-\beta/d}+\frac{1}{N}\sum_{k=1}^{N}\left|V_{t}^{k,N}-\upsilon(X_{t}^{k,N},t)\right|^{2}\right).
\end{eqnarray}

We get

\begin{eqnarray}\label{cotaAN4}
 D_{N}(19,t) & \leq &   C_{\sigma}  \dfrac{1}{N}\sum_{k=1}^{N} | V_{t}^{k,N}-\upsilon_{q}(X_{t}^{k,N},t)|^{2}.  
\end{eqnarray}

We observe 

\begin{align*}
& A_{4}:= D_{N}(2,t)+D_{N}(4,t)+D_{N}(7,t)+D_{N}(12,t)+D_{N}(8,t)+D_{N}(13,t)\nonumber\\
&=-\dfrac{2}{N^{2}}\sum_{k,l=1}^{N}\left(V_{t}^{k,N}-\upsilon(X_{t}^{k,N},t)\right)\cdot\zeta_{N}\left(X_{t}^{k,N}-X_{t}^{l,N}\right)\left(V_{t}^{k,N}-V_{t}^{l,N}\right)\nonumber\\
&-\dfrac{1}{N}\sum_{k=1}^{N}(V_{t}^{k,N}-\upsilon(X_{t}^{k,N},t))\cdot\nabla\left(\varrho^{2}\operatorname{div}_{x} \upsilon\right)(X_{t}^{k,N},t)\nonumber\\
&-\dfrac{1}{N}\sum_{k=1}^{N}(V_{t,q}^{k,N}-\upsilon_{q}(X_{t}^{k,N},t))\frac{1}{\varrho(X_{t}^{k,N},t)} \sum_{i=1}^{d} \nabla^{i}\left(\varrho^{2}\left[\nabla^{i} \upsilon_{q}+\nabla^{q} \upsilon_{i}\right]\right)(X_{t}^{k,N},t).
\end{align*}

Adding and subtracting the term
\begin{equation*}
 \frac{2}{N^{2}}\sum_{k,l=1}^{N}\left(V_{t}^{k,N}-\upsilon(X_{t}^{k,N},t)\right)\cdot\zeta_{N}(X_{t}^{k,N}-X_{t}^{l,N})\left(\upsilon(X_{t}^{k,N},t)-\upsilon(X_{t}^{l,N},t)\right)
 \end{equation*}
we get 
 \begin{align*}
&A_{N}(4,t)\\
=&-\dfrac{2}{N^{2}}\sum_{k,l=1}^{N}\left(V_{t}^{k,N}-\upsilon(X_{t}^{k,N},t)\right)\cdot\zeta_{N}\left(X_{t}^{k,N}-X_{t}^{l,N}\right)\left(V_{t}^{k,N}-V_{t}^{l,N}\right)\nonumber\\
&+\frac{2}{N^{2}}\sum_{k,l=1}^{N}\left(V_{t}^{k,N}-\upsilon(X_{t}^{k,N},t)\right)\cdot\zeta_{N}(X_{t}^{k,N}-X_{t}^{l,N})\left(\upsilon(X_{t}^{k,N},t)-\upsilon(X_{t}^{l,N},t)\right)\nonumber\\
&-\frac{2}{N^{2}}\sum_{k,l=1}^{N}\left(V_{t}^{k,N}-\upsilon(X_{t}^{k,N},t)\right)\cdot\zeta_{N}(X_{t}^{k,N}-X_{t}^{l,N})\left(\upsilon(X_{t}^{k,N},t)-\upsilon(X_{t}^{l,N},t)\right)\nonumber\\
&-\dfrac{1}{N}\sum_{k=1}^{N}(V_{t}^{k,N}-\upsilon(X_{t}^{k,N},t))\cdot\nabla\left(\varrho^{2}\operatorname{div}_{x} \upsilon\right)(X_{t}^{k,N},t)\nonumber\\
&-\dfrac{1}{N}\sum_{k=1}^{N}(V_{t,q}^{k,N}-\upsilon_{q}(X_{t}^{k,N},t))\frac{1}{\varrho(X_{t}^{k,N},t)} \sum_{i=1}^{d} \nabla^{i}\left(\varrho^{2}\left[\nabla^{i} \upsilon_{q}+\nabla^{q} \upsilon_{i}\right]\right)(X_{t}^{k,N},t).
\end{align*}
Since that 
\begin{align*}
&\frac{2}{N^{2}}\sum_{k,l=1}^{N}\left(V_{t}^{k,N}-\upsilon(X_{t}^{k,N},t)\right)\cdot\zeta_{N}(X_{t}^{k,N}-X_{t}^{l,N})\left(\upsilon(X_{t}^{k,N},t)-\upsilon(X_{t}^{l,N},t)\right)\\
&=\frac{2}{N}\sum_{k=1}^{N}\int_{\R^{d}}\left(V_{t}^{k,N}-\upsilon(X_{t}^{k,N},t)\right)\cdot\zeta_{N}(X_{t}^{k,N}-y)\left(\upsilon(X_{t}^{k,N},t)-\upsilon(y,t)\right)d( S_{t}^{N}(y)),
\end{align*}
we obtain
\begin{align}\label{202}
&A_{N}(4,t)=-\dfrac{2}{N^{2}}\sum_{k,l=1}^{N}\left(V_{t}^{k,N}-\upsilon(X_{t}^{k,N},t)\right)\cdot\zeta_{N}\left(X_{t}^{k,N}-X_{t}^{l,N}\right)\left(V_{t}^{k,N}-\upsilon(X_{t}^{k,N},t)\right.\nonumber\\
&\qquad\qquad\qquad\qquad\qquad\qquad\qquad\qquad\qquad\qquad\qquad\qquad\qquad\quad\left.-V_{t}^{l,N}+\upsilon(X_{t}^{l,N},t)\right)\nonumber\\
&-\frac{2}{N}\sum_{k=1}^{N}\int_{\R^{d}}\left(V_{t}^{k,N}-\upsilon(X_{t}^{k,N},t)\right)\cdot\zeta_{N}(X_{t}^{k,N}-y)\left(\upsilon(X_{t}^{k,N},t)-\upsilon(y,t)\right)d( S_{t}^{N}(y))\nonumber\\
&-\dfrac{1}{N}\sum_{k=1}^{N}(V_{t}^{k,N}-\upsilon(X_{t}^{k,N},t))\cdot\nabla\left(\varrho^{2}\operatorname{div}_{x} \upsilon\right)(X_{t}^{k,N},t)\nonumber\\
&-\dfrac{1}{N}\sum_{k=1}^{N}(V_{t,q}^{k,N}-\upsilon_{q}(X_{t}^{k,N},t))\frac{1}{\varrho(X_{t}^{k,N},t)} \sum_{i=1}^{d} \nabla^{i}\left(\varrho^{2}\left[\nabla^{i} \upsilon_{q}+\nabla^{q} \upsilon_{i}\right]\right)(X_{t}^{k,N},t).
\end{align}

By symmetry of the function  $\zeta_{N,qq^{\prime}}\left(\cdot\right)$ we deduce

\begin{align}\label{simetry}
&-\dfrac{1}{N^{2}}\sum_{k,l=1}^{N}\zeta_{N,qq^{\prime}}\left(X_{t}^{k,N}-X_{t}^{l,N}\right)\left(V_{t,q}^{k,N}-\upsilon_{q}(X_{t}^{k,N},t)\right)\left(V_{t,q^{\prime}}^{k,N}-\upsilon_{q^{\prime}}(X_{t}^{k,N},t)\right.\nonumber\\
&\quad\qquad\qquad\qquad\qquad\qquad\qquad\qquad\qquad\qquad\qquad\qquad\qquad\left.-V_{t,q^{\prime}}^{l,N}+\upsilon_{q^{\prime}}(X_{t}^{l,N},t)\right)\nonumber\\
&=-\dfrac{1}{N^{2}}\sum_{k,l=1}^{N}\zeta_{N,qq^{\prime}}\left(X_{t}^{l,N}-X_{t}^{k,N}\right)\left(\upsilon_{q}(X_{t}^{l,N},t)-V_{t,q}^{l,N}\right)\left(V_{t,q^{\prime}}^{k,N}-\upsilon_{q^{\prime}}(X_{t}^{k,N},t)\right.\nonumber\\
&\quad\qquad\qquad\qquad\qquad\qquad\qquad\qquad\qquad\qquad\qquad\qquad\qquad\left.-V_{t,q^{\prime}}^{l,N}+\upsilon_{q^{\prime}}(X_{t}^{l,N},t)\right)\nonumber\\
&=-\dfrac{1}{N^{2}}\sum_{k,l=1}^{N}\zeta_{N,qq^{\prime}}\left(X_{t}^{k,N}-X_{t}^{l,N}\right)\left(\upsilon_{q}(X_{t}^{l,N},t)-V_{t,q}^{l,N}\right)\left(V_{t,q^{\prime}}^{k,N}-\upsilon_{q^{\prime}}(X_{t}^{k,N},t)\right.\nonumber\\
&\quad\qquad\qquad\qquad\qquad\qquad\qquad\qquad\qquad\qquad\qquad\qquad\qquad\left.-V_{t,q^{\prime}}^{l,N}+\upsilon_{q^{\prime}}(X_{t}^{l,N},t)\right)
\end{align}
Using (\ref{simetry})  in the first term of  (\ref{202}) we have 

\begin{align}
&-\dfrac{2}{N^{2}}\sum_{k,l=1}^{N}\zeta_{N,qq^{\prime}}\left(X_{t}^{k,N}-X_{t}^{l,N}\right)\left(V_{t,q}^{k,N}-\upsilon_{q}(X_{t}^{k,N},t)\right)\left(V_{t,q^{\prime}}^{k,N}-\upsilon_{q^{\prime}}(X_{t}^{k,N},t)\right.\nonumber\\
&\quad\qquad\qquad\qquad\qquad\qquad\qquad\qquad\qquad\qquad\qquad\qquad\qquad\left.-V_{t,q^{\prime}}^{l,N}+\upsilon_{q^{\prime}}(X_{t}^{l,N},t)\right)\nonumber\\
&=-\dfrac{1}{N^{2}}\sum_{k,l=1}^{N}\zeta_{N,qq^{\prime}}\left(X_{t}^{k,N}-X_{t}^{l,N}\right)\left(V_{t,q}^{k,N}-\upsilon_{q}(X_{t}^{k,N},t)\right)\left(V_{t,q^{\prime}}^{k,N}-\upsilon_{q^{\prime}}(X_{t}^{k,N},t)\right.\nonumber\\
&\quad\qquad\qquad\qquad\qquad\qquad\qquad\qquad\qquad\qquad\qquad\qquad\qquad\left.-V_{t,q^{\prime}}^{l,N}+\upsilon_{q^{\prime}}(X_{t}^{l,N},t)\right)\nonumber\\
&-\dfrac{1}{N^{2}}\sum_{k,l=1}^{N}\zeta_{N,qq^{\prime}}\left(X_{t}^{k,N}-X_{t}^{l,N}\right)\left(V_{t,q}^{k,N}-\upsilon_{q}(X_{t}^{k,N},t)\right)\left(V_{t,q^{\prime}}^{k,N}-\upsilon_{q^{\prime}}(X_{t}^{k,N},t)\right.\nonumber\\
&\quad\qquad\qquad\qquad\qquad\qquad\qquad\qquad\qquad\qquad\qquad\qquad\qquad\left.-V_{t,q^{\prime}}^{l,N}+\upsilon_{q^{\prime}}(X_{t}^{l,N},t)\right)\nonumber\\
&=-\dfrac{1}{N^{2}}\sum_{k,l=1}^{N}\zeta_{N,qq^{\prime}}\left(X_{t}^{k,N}-X_{t}^{l,N}\right)\left(V_{t,q}^{k,N}-\upsilon_{q}(X_{t}^{k,N},t)\right)\left(V_{t,q^{\prime}}^{k,N}-\upsilon_{q^{\prime}}(X_{t}^{k,N},t)\right.\nonumber\\
&\quad\qquad\qquad\qquad\qquad\qquad\qquad\qquad\qquad\qquad\qquad\qquad\qquad\left.-V_{t,q^{\prime}}^{l,N}+\upsilon_{q^{\prime}}(X_{t}^{l,N},t)\right)\nonumber\\
&-\dfrac{1}{N^{2}}\sum_{k,l=1}^{N}\zeta_{N,qq^{\prime}}\left(X_{t}^{k,N}-X_{t}^{l,N}\right)\left(\upsilon_{q}(X_{t}^{l,N},t)-V_{t,q}^{l,N}\right)\left(V_{t,q^{\prime}}^{k,N}-\upsilon_{q^{\prime}}(X_{t}^{k,N},t)\right.\nonumber\\
&\quad\qquad\qquad\qquad\qquad\qquad\qquad\qquad\qquad\qquad\qquad\qquad\qquad\left.-V_{t,q^{\prime}}^{l,N}+\upsilon_{q^{\prime}}(X_{t}^{l,N},t)\right)\nonumber
\end{align}
\begin{align}\label{203}
&=-\dfrac{1}{N^{2}}\sum_{k,l=1}^{N}\zeta_{N,qq^{\prime}}\left(X_{t}^{k,N}-X_{t}^{l,N}\right)\left(V_{t,q}^{k,N}-\upsilon_{q}(X_{t}^{k,N},t)-V_{t,q}^{l,N}+\upsilon(X_{t}^{l,N},t)\right)\nonumber\\
&\qquad\qquad\qquad\qquad\qquad\qquad\qquad\qquad\left(V_{t,q^{\prime}}^{k,N}-\upsilon_{q^{\prime}}(X_{t}^{k,N},t)-V_{t,q^{\prime}}^{l,N}+\upsilon_{q^{\prime}}(X_{t}^{l,N},t)\right).
\end{align}

From (\ref{202}) and (\ref{203}) we get

\begin{align}\label{204}
&A_{N}(4,t)\nonumber\\
=&-\dfrac{1}{N^{2}}\sum_{k,l=1}^{N}\zeta_{N,qq^{\prime}}\left(X_{t}^{k,N}-X_{t}^{l,N}\right)\left(V_{t,q}^{k,N}-\upsilon_{q}(X_{t}^{k,N},t)-V_{t,q}^{l,N}+\upsilon(X_{t}^{l,N},t)\right)\nonumber\\
&\qquad\qquad\qquad\qquad\qquad\qquad\qquad\qquad\left(V_{t,q^{\prime}}^{k,N}-\upsilon_{q^{\prime}}(X_{t}^{k,N},t)-V_{t,q^{\prime}}^{l,N}+\upsilon_{q^{\prime}}(X_{t}^{l,N},t)\right)\nonumber\\
&-\frac{2}{N}\sum_{k=1}^{N}\int_{\R^{d}}\left(V_{t}^{k,N}-\upsilon(X_{t}^{k,N},t)\right)\cdot\zeta_{N}(X_{t}^{k,N}-y)\left(\upsilon(X_{t}^{k,N},t)-\upsilon(y,t)\right)d( S_{t}^{N}(y))\nonumber\\
&-\dfrac{1}{N}\sum_{k=1}^{N}(V_{t}^{k,N}-\upsilon(X_{t}^{k,N},t))\cdot\nabla\left(\varrho^{2}\operatorname{div}_{x} \upsilon\right)(X_{t}^{k,N},t)\nonumber\\
&-\dfrac{1}{N}\sum_{k=1}^{N}(V_{t,q}^{k,N}-\upsilon_{q}(X_{t}^{k,N},t))\frac{1}{\varrho(X_{t}^{k,N},t)} \sum_{i=1}^{d} \nabla^{i}\left(\varrho^{2}\left[\nabla^{i} \upsilon_{q}+\nabla^{q} \upsilon_{i}\right]\right)(X_{t}^{k,N},t).
\end{align}

Adding and subtracting 

\begin{equation*}
\dfrac{2}{N}\sum_{k=1}^{N}\int_{\mathbb{R}^{d}}\varrho(y, t)\left(V_{t}^{k,N}-\upsilon(X_{t}^{k,N},t)\right)\cdot\zeta_{N}(X_{t}^{k,N}-y)\left(\upsilon(X_{t}^{k,N}, t)-\upsilon(y, t)\right)dy    
\end{equation*}
in  \eqref{204}, we obtain 

\begin{eqnarray}
A_{N}(4,t)=R_{N,1}+R_{N,2}+R_{N,3},
\end{eqnarray}
where
\begin{eqnarray*}
 R_{N,1} &\\
:=&-\dfrac{1}{N^{2}}\sum_{k,l=1}^{N}\left(V_{t}^{k,N}-\upsilon(X_{t}^{k,N},t)-V_{t}^{l,N}+\upsilon(X_{t}^{l,N},t)\right)\cdot\zeta_{N}\left(X_{t}^{k,N}-X_{t}^{l,N}\right)\nonumber\\
&&\qquad\qquad\qquad\qquad\qquad\left(V_{t}^{k,N}-\upsilon(X_{t}^{k,N},t)-V_{t}^{l,N}+\upsilon(X_{t}^{l,N},t)\right),
\end{eqnarray*}

\begin{align*}
  &R_{N,2}\\
  :=&\dfrac{2}{N}\sum_{k=1}^{N}\int_{\mathbb{R}^{d}}\varrho(y, t)\left(V_{t}^{k,N}-\upsilon(X_{t}^{k,N},t)\right)\cdot\zeta_{N}(X_{t}^{k,N}-y)\\
&\quad\qquad\qquad\qquad\qquad\qquad\qquad\qquad\qquad\qquad\qquad\left(\upsilon(X_{t}^{k,N}, t)-\upsilon(y, t)\right)dy\nonumber\\
&-\frac{2}{N}\sum_{k=1}^{N}\int_{\R^{d}}\left(V_{t}^{k,N}-\upsilon(X_{t}^{k,N},t)\right)\cdot\zeta_{N}(X_{t}^{k,N}-y)\left(\upsilon(X_{t}^{k,N},t)-\upsilon(y,t)\right)d( S_{t}^{N}(y))\nonumber\\  
\end{align*}
and 

\begin{align*}
    &R_{N,3}\\
    :=&-\dfrac{1}{N}\sum_{k=1}^{N}(V_{t}^{k,N}-\upsilon(X_{t}^{k,N},t))\cdot\nabla\left(\varrho^{2}\operatorname{div}_{x} \upsilon\right)(X_{t}^{k,N},t)\nonumber\\
&-\dfrac{1}{N}\sum_{k=1}^{N}(V_{t,q}^{k,N}-\upsilon_{q}(X_{t}^{k,N},t))\frac{1}{\varrho(X_{t}^{k,N},t)} \sum_{i=1}^{d} \nabla^{i}\left(\varrho^{2}\left[\nabla^{i} \upsilon_{q}+\nabla^{q} \upsilon_{i}\right]\right)(X_{t}^{k,N},t)\nonumber\\
&-\dfrac{2}{N}\sum_{k=1}^{N}\int_{\mathbb{R}^{d}}\varrho(y, t)\left(V_{t}^{k,N}-\upsilon(X_{t}^{k,N},t)\right)\cdot\zeta_{N}(X_{t}^{k,N}-y)\left(\upsilon(X_{t}^{k,N}, t)-\upsilon(y, t)\right)dy.
\end{align*}

Now, we will estimated the terms  $R_{N,1}$, $R_{N,2}$ and  $R_{N,3}$. From lemma \ref{lemmaR1} we have

\begin{eqnarray}\label{estimativaRN1}
R_{N, 1}&\leq& C (Q_{t}^{N} +  (Q_{t}^{N})^{3/2} N^{\gamma(d+4) / 2 d}).
\end{eqnarray}

From lemma \ref{lemmaR2} obtain

\begin{eqnarray}\label{estimativaRN2}
&&\left|R_{N, 2}\right| \leq
 C\left(Q_{t}^{N}+N^{-2 \beta / d} N^{2 \gamma(d+2) / d}+(Q_{t}^{N})^{3 / 2} N^{\gamma(d+4) / 2 d}\right).
\end{eqnarray}

Thus from  \eqref{estimativaRN1} and \eqref{estimativaRN2}, we  have  that there exist   $C>0$ such that 
\begin{eqnarray}\label{estiR1R2}
    R_{N,1}+R_{N,2}&\leq&  C\left(Q_{t}^{N}+N^{-2 \beta / d} N^{2 \gamma(d+2) / d}+(Q_{t}^{N})^{3 / 2} N^{\gamma(d+4) / 2 d}\right).
\end{eqnarray}

From lemma \ref{lemmaR3} we deduce 

\begin{eqnarray}\label{estiRN3}
        |R_{N,3}|&\leq  C\left(N^{-\gamma/d}+\frac{1}{N}\sum_{k=1}^{N}|V_{t}^{k,N}-\upsilon(X_{t}^{k,N},t)|^{2}\right).
    \end{eqnarray}

From  \eqref{estiR1R2} e \eqref{estiRN3}, there exist  $C>0$ such that 
\begin{equation}\label{EsQ6}
  A_{N}(4,t)\leq C  \left(Q_{t}^{N}+N^{-2 \beta / d} N^{2 \gamma(d+2) / d}+(Q_{t}^{N})^{3 / 2} N^{\gamma(d+4) / 2 d}+N^{-\gamma / d}\right).
\end{equation}

From \eqref{cotaAN0}-\eqref{cotaAN4} and  \eqref{EsQ6}, we conclude 
\begin{align*}
   Q_{t\wedge\tau_{m}}^{N}
\leq &Q_{0}^{N}+C\int_{0}^{t}\left(Q_{s\wedge\tau_{m}}^{N}+N^{-\beta / d}+N^{-\gamma/ d}+N^{-2 \beta / d} N^{2 \gamma(d+2) / d}\right.\\
&\quad\qquad\qquad\qquad\qquad\qquad\qquad\qquad\left.+N^{\gamma(d+4) / 2 d}(Q_{s\wedge\tau_{m}}^{N})^{3 / 2} \right)ds\\
&+\left|\int_0^{t \wedge \tau_m} \frac{2}{N} \sum_{k=1}^N \sigma_q\left(X_s^{k, N}\right)\left| V_{s, q}^{k, N}-\upsilon_q\left(X_s^{k, N}, s\right)\right|^2 d B_s^q \right|,
\end{align*}

com $0\leq t\leq T$. We set 
\begin{equation*}
    T_{N}=\inf \left\{t \geq 0:  Q_{t}^{N} \geq N^{-\delta}\right\} \wedge T . 
\end{equation*}

Then we have 

\begin{align*}
Q_{t \wedge T_N \wedge \tau_m}^N \leq& Q_0^N  +C \int_0^t\left(Q_{s \wedge T_N \wedge \tau_m}^N+N^{-\beta / d}+N^{-\gamma / d}+N^{-2 \beta / d} N^{2 \gamma(d+2) / d}\right.\nonumber\\
&\left.\qquad\qquad\qquad\qquad+N^{\gamma(d+4) / 2 d}\left(Q_{s \wedge T_N \wedge \tau_m}\right)^{3 / 2}\right) d s \\
& +\left|\int_0^{t \wedge T_N \wedge \tau_m} \frac{2}{N} \sum_{k=1}^N \sigma_q\left(X_s^{k, N}\right)\left| V_{s, q}^{k, N}-\upsilon_q\left(X_s^{k, N}, s\right)\right|^2 d B_s^q \right|.
\end{align*}

Taking supreme,   we obtain 

\begin{align*}
\sup_{[0,t]}Q_{t \wedge T_N \wedge \tau_m}^N \leq& Q_0^N  +C \int_0^t\left(\sup_{[0,s]}Q_{u \wedge T_N \wedge \tau_m}^N+N^{-\beta / d}+N^{-\gamma / d}\right.\nonumber\\
&\left.\qquad+N^{-2 \beta / d} N^{2 \gamma(d+2) / d}+N^{\gamma(d+4) / 2 d}\left(\sup_{[0,s]}Q_{u \wedge T_N \wedge \tau_m}\right)^{3 / 2}\right) d s \\
& +\sup_{[0,t]}\left|\int_0^{t \wedge T_N \wedge \tau_m} \frac{2}{N} \sum_{k=1}^N \sigma_q\left(X_s^{k, N}\right)\left| V_{s, q}^{k, N}-\upsilon_q\left(X_s^{k, N}, s\right)\right|^2 d B_s^q \right|.
\end{align*}

By Holder inequality  and taking expectation  we deduce 
\begin{align*}
&\EE\left(\sup _{[0, t]} Q_{s \wedge T_N \wedge \tau_m}^N\right)^2\\
\leq& \EE\left(Q_0^N\right)^2+C_T \int_0^t\left(\EE\left(\sup _{[0, s]} Q_{u \wedge T_N \wedge \tau_m}^N\right)^2+N^{-2 \beta / d}+N^{-2 \gamma / d}\right. \\
&\qquad\qquad\qquad\qquad\left.+N^{-4 \beta / d} N^{4 \gamma(d+2) / d}+N^{\gamma(d+4) / d} \EE\ \left(\sup _{[0, u]} Q_{u \wedge T_N \wedge \tau_m}^N\right)^3\right) d s \\
&+\EE\left(\sup_{[0,t]}\left|\int_0^{t \wedge T_N \wedge \tau_m} \frac{2}{N} \sum_{k=1}^N \sigma_q\left(X_s^{k, N}\right)\left| V_{s, q}^{k, N}-\upsilon_q\left(X_s^{k, N}, s\right)\right|^2 d B_s^q \right|\right)^{2}.
\end{align*}
By definition of $T_{N}$  and maximal martingale inequality we have 
\begin{align*}
&\EE\left(\sup _{[0, t]} Q_{s \wedge T_N \wedge \tau_m}^N\right)^2\\
\leq& \EE\left(Q_0^N\right)^2+C_T \int_0^t\left(\EE\left(\sup _{[0, s]} Q_{u \wedge T_N \wedge \tau_m}^N\right)^2+N^{-2 \beta / d}+N^{-2 \gamma / d}\right. \\
&\qquad\qquad\qquad\qquad\left.+N^{-4 \beta / d} N^{4 \gamma(d+2) / d}+N^{\gamma(d+4) / d} N^{-3 \delta}\right) d s \\
&+\EE\left|\int_0^{t \wedge T_N \wedge \tau_m} \frac{2}{N} \sum_{k=1}^N \sigma_q\left(X_s^{k, N}\right)\left| V_{s, q}^{k, N}-\upsilon_q\left(X_s^{k, N}, s\right)\right|^2 d B_s^q \right|^2.
\end{align*}

By Ito isometry , by boundedness of $\sigma$ and  taking supreme we have 
\begin{align*}
 &\EE\left|\int_0^{t \wedge T_N \wedge \tau_m} \frac{2}{N} \sum_{k=1}^N \sigma_q\left(X_s^{k, N}\right)\left| V_{s, q}^{k, N}-\upsilon_q\left(X_s^{k, N}, s\right)\right|^2 d B_s^q \right|^2\\
 =&\EE  \int_0^{t \wedge T_N \wedge \tau_m}  \big| \frac{2}{N}  |\sum_{k=1}^N \sigma_q\left(X_s^{k, N}\right)\left| V_{s, q}^{k, N}-\upsilon_q\left(X_s^{k, N}, s\right)\right|^2 \big|^{2} ds\\
 \leq& C\EE  \int_0^{t \wedge T_N \wedge \tau_m}  \big| \frac{2}{N} \sum_{k=1}^N \left| V_{s, q}^{k, N}-\upsilon_q\left(X_s^{k, N}, s\right)\right|^{2} \big|^{2} ds\\
 \leq& C\EE  \int_0^{t \wedge T_N \wedge \tau_m}  (Q_{s}^{N})^{2} ds \\
 \leq& C \int_0^{t}  \EE\left(\sup_{[0, s]} Q_{u \wedge T_N \wedge \tau_m}^N\right)^2 ds.
\end{align*}
Therefore we deduce 
\begin{align*}
&\EE\left(\sup _{[0, t]} Q_{s \wedge T_N \wedge \tau_m}^N\right)^2\\
\leq& \EE\left(Q_0^N\right)^2+C_T \int_0^t\left(\EE\left(\sup _{[0, s]} Q_{u \wedge T_N \wedge \tau_m}^N\right)^2+N^{-2 \beta / d}+N^{-2 \gamma / d}\right.\nonumber\\
&\left.\qquad\qquad\qquad\qquad+N^{-4 \beta / d} N^{4 \gamma(d+2) / d}+N^{\gamma(d+4) / d} N^{-3 \delta}\right) d s \\
&+C \int_0^t \EE\left(\sup_{[0, s]} Q_{u \wedge T_N \wedge \tau_m}^N\right)^2 d s.
\end{align*}

 By Gronwal lemma we conclude 
\begin{align*}
  \EE\left(\sup _{[0, t]} Q_{s \wedge T_N \wedge \tau_m}^N\right)^2 \leq& C_T \EE\left(Q_0^N\right)^2+C_T N^{-2 \beta / d}+N^{-2 \gamma / d}+N^{-4 \beta / d} N^{4 \gamma(d+2) / d}\\
  &+N^{\gamma(d+4) / d} N^{-3 \delta}. 
\end{align*}
Therefore from  \eqref{paramdelta} and  (\ref{condi,ini.Q0N}) 
\begin{equation}\label{dmEQtN}
  \lim _{N \rightarrow \infty} N^{2 \delta} \EE\left(\sup _{\left[0, T_N\right]} Q_{s \wedge \tau_m}^N\right)^2=0.  
\end{equation}

Suppose there are infinite numbers $N_{k}$ such that $T^{N_{k}}< T$. From  (\ref{dmEQtN})    we have that there exist  a sub-sequence $N_{j}$ of $N_{k}$   such that 
almost surely we have 

\[
\lim_{j\rightarrow\infty} N_{j}^{\delta} \sup_{[0,T_{N_{j}}]} Q_{s\wedge  \tau_{m}  }^{N_{j}} =0. 
\]

Therefore  for $j$ large we obtain 

$$
N_{j}^{\delta} Q_{T_{N_{j}} \wedge  \tau_{m}  }^{N_{j}}<  \epsilon,
$$

this  contradicts the definition of $T_{N_{j}}$.  So we conclude that $T^{N}=T$
 for $N$ large enough.  Thus we have

\begin{eqnarray}\label{convefinal}
\lim _{N \rightarrow \infty}  N^{2\delta} \mathbb{E} \big( \sup_{[0,T]} Q_{s\wedge  \tau_{m}  }^{N}  \big) ^{2}  =0.
\end{eqnarray}

Finally we will show the convergence  of  $S_{t}^{N}$ e $V_{t}^{N}$ to  $\upsilon$ e $\varrho \upsilon$  respectively.
 
\noindent Let $f\in  E_{2,r}^{\alpha}$, by  \eqref{cotafNbeta}  we deduce

\begin{eqnarray}\label{limXNvarrNS}
|\langle S_{t}^{N},f\rangle-\langle\varrho(.,t),f \rangle|&=&|\langle S_{t}^{N},f-f\ast\phi_{N}^{r}+f\ast\phi_{N}^{r}\rangle-\langle\varrho(.,t),f \rangle|\nonumber\\
&=&|\langle S_{t}^{N},f-f\ast\phi_{N}^{r}\rangle+\langle S_{t}^{N}\ast\phi_{N}^{r},f\rangle-\langle\varrho(.,t),f \rangle|\nonumber\\
&=&|\langle S_{t}^{N},f-f\ast\phi_{N}^{r}\rangle+\langle S_{t}^{N}\ast\phi_{N}^{r}-\varrho(.,t),f \rangle|\nonumber\\
&\leqq&|\langle S_{t}^{N},f-f\ast\phi_{N}^{r}\rangle|+|\langle S_{t}^{N}\ast\phi_{N}^{r}-\varrho(.,t),f \rangle|\nonumber\\
&\leqq&|\langle S_{t}^{N},1\rangle|\left\Vert f-f\ast\phi_{N}^{r}\right\Vert_{\infty}+|\langle S_{t}^{N}\ast\phi_{N}^{r}-\varrho(,t),f \rangle|\nonumber\\
&\leqq&\left\Vert f-f\ast\phi_{N}^{r}\right\Vert_{\infty}+\left\Vert f\right\Vert_{L^{2}(\R^{d})}\left\Vert S_{t}^{N}\ast\phi_{N}^{r}-\varrho(.,t)\right\Vert_{L^{2}(\R^{d})}\nonumber\\
&\leqq&CN^{-\beta/d}\left\Vert\nabla f\right\Vert_{\infty}+\left\Vert f\right\Vert_{L^{2}(\R^{d})}\left\Vert S_{t}^{N}\ast\phi_{N}^{r}-\varrho(.,t)\right\Vert_{L^{2}(\R^{d})}.\nonumber\\
\end{eqnarray}

From \eqref{limXNvarrNS}  and  \eqref{Sobolev} we have

\[
\lim _{N \rightarrow \infty}  N^{2\delta} \mathbb{E} \sup_{[0, T]} \|S_{t\wedge  \tau_{m} }^{N}- \varrho_{t \wedge  \tau_{m} } \|_{E_{2,\tilde{r}}^{-\alpha}}^{2}
=0. 
\]

\noindent Let  $g\in  E_{2,r}^{\alpha}$ we obtain 
\begin{eqnarray}\label{convV}
&&\left|\left\langle V_{t}^{N}-\varrho(.,t)\upsilon(.,t),g\right\rangle\right|\nonumber\\
&&\quad=\left|\frac{1}{N}\sum_{k=1}^{N}V_{t}^{k,N}\cdot g(X_{t}^{k,N})-\left\langle\varrho(.,t),\upsilon(.,t)\cdot g\right\rangle\right|\nonumber\\
&&\quad=\left|\frac{1}{N}\sum_{k=1}^{N}(V_{t}^{k,N}-\upsilon(X_{t}^{k,N}),t)\cdot g(X_{t}^{k,N})+\left\langle S_{t}^{N}-\varrho(.,t),\upsilon(.,t)\cdot g\right\rangle\right|\nonumber\\
&&\quad\leqq\frac{1}{N}\sum_{k=1}^{N}\left|V_{t}^{k,N}-\upsilon(X_{t}^{k,N},t)\right|\left| g(X_{t}^{k,N})\right|\nonumber+\left|\left\langle S_{t}^{N}-\varrho(.,t),\upsilon(.,t)\cdot g\right\rangle\right|\nonumber\\
&&\quad\leqq\Vert g\Vert_{\infty}\frac{1}{N}\sum_{k=1}^{N}\left|V_{t}^{k,N}-\upsilon(X_{t}^{k,N},t)\right|+|\langle S_{t}^{N}-\varrho(.,t),\upsilon(.,t)\cdot g\rangle|\nonumber\\
&&\quad\leqq C\Vert g\Vert_{\infty}\left(\frac{1}{N}\sum_{k=1}^{N}|V_{t}^{k,N}-\upsilon(X_{t}^{k,N},t)|^{2} \right)^{1/2}+|\langle S_{t}^{N}-\varrho(.,t),\upsilon(.,t)\cdot g\rangle|\nonumber\\
&&
\end{eqnarray}

From  \eqref{convV} and   \eqref{Sobolev}  we get

\[
\lim _{N \rightarrow \infty}  N^{2\delta} \mathbb{E} \sup_{[0, T]} \|V_{t\wedge  \tau_{m} }^{N}- (\varrho \upsilon)_{t \wedge  \tau_{m} } \|_{E_{2,\tilde{r}}^{-\alpha}}^{2}.
=0. 
\]

\end{proof}

\section{Apendix}

\begin{lemma}\label{Ito-correction} We have 
  \begin{align}\label{cov1}
&\dfrac{-4}{N}\sum_{k=1}^{N} \upsilon_{q}(X_{t}^{k,N},t) \sigma_{q}(X_{t}^{k,N}) V_{t,q}^{k,N}\circ dB_{t}^{q}\nonumber\\
&= \dfrac{-4}{N}\sum_{k=1}^{N} \upsilon_{q}(X_{t}^{k,N},t) \sigma_{q}(X_{t}^{k,N}) V_{t,q}^{k,N} dB_{t}^{q}-  \dfrac{4}{N}\sum_{k=1}^{N} \upsilon_{q}(X_{t}^{k,N},t) |\sigma_{q}(X_{t}^{k,N})|^{2} V_{t,q}^{k,N} dt,\nonumber\\
\end{align}
\begin{align}\label{cov2}
&\dfrac{2}{N}\sum_{k=1}^{N} V_{t,q}^{k,N}  \sigma_{q}(X_{t}^{k,N}) V_{t,q}^{k,N} \circ d B_{t}^{q}\nonumber\\
&=\dfrac{2}{N}\sum_{k=1}^{N} V_{t,q}^{k,N}  \sigma_{q}(X_{t}^{k,N}) V_{t,q}^{k,N}  d B_{t}^{q}+ \dfrac{2}{N}\sum_{k=1}^{N} |V_{t,q}^{k,N}|^{2}  |\sigma_{q}(X_{t}^{k,N})|^{2}  dt,
\end{align}
and 
\begin{align}\label{cov3}
&\dfrac{2}{N}\sum_{k=1}^{N}  \upsilon_{q}(X_{t}^{k,N},t) \sigma_{q}(X_{t}^{k,N}) \upsilon_{q}(X_{t}^{k,N},t)  \circ dB_{t}^{q}\nonumber\\
&=\dfrac{2}{N}\sum_{k=1}^{N} \upsilon_{q}(X_{t}^{k,N},t) \sigma_{q}(X_{t}^{k,N}) \upsilon_{q}(X_{t}^{k,N},t) dB_{t}^{q}+ \dfrac{2}{N}\sum_{k=1}^{N} |\upsilon_{q}(X_{t}^{k,N},t)|^{2} |\sigma_{q}(X_{t}^{k,N})|^{2} dt. 
\end{align}

\end{lemma}

\begin{proof} We will verifies (\ref{cov1}). 
The relation between Ito and Stratonovich integrals is (see \cite{Ku2})

\[
\dfrac{2}{N}\sum_{k=1}^{N} \upsilon_{q}(X_{t}^{k,N},t) \sigma_{q}(X_{t}^{k,N}) V_{t,q}^{k,N}\circ dB_{t}^{q}
\]

\[
=\dfrac{2}{N}\sum_{k=1}^{N} \upsilon_{q}(X_{t}^{k,N},t) \sigma_{q}(X_{t}^{k,N}) V_{t,q}^{k,N}  dB_{t}^{q} + 
\frac{1}{2}  \left[  \dfrac{2}{N}\sum_{k=1}^{N} \upsilon_{q}(X_{.}^{k,N},.) \sigma_{q}(X_{.}^{k,N}) V_{.,q}^{k,N}, B_{.}  \right]_{t}
\]

where $ \left[.,. \right]_{t} $ denotes the joint quadratic variation. Applying Ito  formula for the product,  Ito-Kunita-Wentzell formula   and  considering the  expression  of   $d[V_{t,q}^{k,N}\upsilon_{q}(X_{t}^{k,N},t)]$ given in  \eqref{ito-wwent}, we 
have 

\begin{align*}
&d\left(-\dfrac{4}{N}\sum_{k=1}^{N} \upsilon_{q}(X_{t}^{k,N},t) \sigma_{q}(X_{t}^{k,N}) V_{t,q}^{k,N}\right)\nonumber\\
=&-\dfrac{4}{N}\sum_{k=1}^{N}\sigma_{q}(X_{t}^{k,N})\circ d[V_{t,q}^{k,N}\upsilon_{q}(X_{t}^{k,N},t)]-\dfrac{4}{N}\sum_{k=1}^{N} \upsilon_{q}(X_{t}^{k,N},t) V_{t,q}^{k,N}\circ d(\sigma_{q}(X_{t}^{k,N}))\nonumber\\
=&\dfrac{4}{N}\sum_{k=1}^{N}\sigma_{q}(X_{t}^{k,N})\upsilon_{q}(X_{t}^{k,N},t)\nabla^{q}\left( S_{t}^{N}\ast\phi_{N}\right)(X_{t}^{k,N})dt\nonumber\\
&+\dfrac{2}{N^{2}}\sum_{l,k=1}^{N}\sigma_{q}(X_{t}^{k,N})\upsilon_{q}(X_{t}^{k,N},t)\zeta_{N,q,\ast}\left(X_{t}^{k,N}-X_{t}^{l,N}\right)\left(V_{t}^{k,N}-V_{t}^{l,N}\right)dt\nonumber\\
&+\dfrac{4}{N}\sum_{k=1}^{N}\sigma_{q}(X_{t}^{k,N})V_{t,q}^{k,N}\upsilon(X_{t}^{k,N},t)\cdot\nabla\upsilon_{q}(X_{t}^{k,N},t)dt\nonumber\\
&+\dfrac{4}{N}\sum_{k=1}^{N}\sigma_{q}(X_{t}^{k,N})V_{t,q}^{k,N}\nabla^{q}\varrho(X_{t}^{k,N},t)dt\nonumber\\
&-\dfrac{4}{N}\sum_{k=1}^{N}\frac{1}{2}\sigma_{q}(X_{t}^{k,N})V_{t,q}^{k,N}\nabla^{q}\left(\varrho^{2}\operatorname{div}_{x} \upsilon\right)(X_{t}^{k,N},t)dt\nonumber\\
&-\dfrac{4}{N}\sum_{k=1}^{N}\sigma_{q}(X_{t}^{k,N})V_{t,q}^{k,N}\frac{1}{2\varrho(X_{t}^{k,N},t)} \sum_{i=1}^{d} \nabla^{i}\left(\varrho^{2}\left[\nabla^{i} \upsilon_{q}+\nabla^{q} \upsilon_{i}\right]\right)(X_{t}^{k,N},t)dt
\end{align*}
\begin{align}\label{upsiqVqrhoqN}
&-\dfrac{4}{N}\sum_{k=1}^{N}\sigma_{q}(X_{t}^{k,N})V_{t,q}^{k,N}\nabla\upsilon_{q}(X_{t}^{k,N},t)\cdot V_{t}^{k,N}dt\quad\qquad\qquad\qquad\qquad\qquad\qquad\nonumber\\
&-\dfrac{8}{N}\sum_{k=1}^{N}V_{t,q}^{k,N}|\sigma_{q}(X_{t}^{k,N})|^{2}\upsilon_{q}(X_{t}^{k,N},t)  \circ dB_{t}^{q}\nonumber\\
&-\dfrac{4}{N}\sum_{k=1}^{N} \upsilon_{q}(X_{t}^{k,N},t) V_{t,q}^{k,N}\nabla\sigma_{q}(X_{t}^{k,N})\cdot V_{t}^{k,N}dt.
\end{align}
Only the martingale part of  $-\dfrac{4}{N}\sum_{k=1}^{N} \upsilon_{q}(X_{t}^{k,N},t) \sigma_{q}(X_{t}^{k,N}) V_{t,q}^{k,N}$
counts in the joint quadratic variation.   From  \eqref{upsiqVqrhoqN}  the only term contributing to the covariance is
\begin{align*}
   & \left[-\dfrac{4}{N}\sum_{k=1}^{N} \upsilon_{q}(X_{t}^{k,N},t) \sigma_{q}(X_{t}^{k,N}) V_{t,q}^{k,N},B_{t}^{q}\right]\\
   =&\left[-\dfrac{8}{N}\sum_{k=1}^{N}\int_{0}^{t}V_{s,q}^{k,N}|\sigma_{q}(X_{s}^{k,N})|^{2}\upsilon_{q}(X_{s}^{k,N},s)  \circ dB_{s}^{q},B_{t}^{q}\right]\\
    =&-\dfrac{8}{N}\sum_{k=1}^{N}\int_{0}^{t}V_{s,q}^{k,N}|\sigma_{q}(X_{s}^{k,N})|^{2}\upsilon_{q}(X_{s}^{k,N},s)  ds.
\end{align*}

Then from   relation between the Stratonovich  and  Ito integral, we can deduce (\ref{cov1}).  Using similar  arguments we deduce 
 (\ref{cov2}) and (\ref{cov3}).
\end{proof}
We use the following notations
\begin{align}\label{DNdN}
B_{N}(dx)=&\varrho(x)dx- d( S_{t}^{N}(x))\nonumber\\
b_{N}(x)=&\varrho(x)-\left( S_{t}^{N}\ast\phi_{N}^{r}\right)(x)\\
\digamma_{N}^{k}(t)=&V_{t}^{k,N}-\upsilon(X_{t}^{k,N},t)\nonumber\\
\psi_{N}^{r}(x)=&N^{\gamma}\pi^{-d/2}e^{-|x|^{2}N^{2\gamma/d}}\nonumber
\end{align}
Note that
\begin{equation}\label{sigamNr}
    \psi_{N}^{r}\ast\psi_{N}^{r}=\psi_{N},
\end{equation}
\begin{align}\label{prophiN1}
    N^{2 \gamma / d} x_{i} \psi_{N}(x)=&-\nabla^{i} \psi_{N}(x),
\end{align}
\begin{align}\label{prophiN2}
    N^{4 \gamma / d} x_{i} x_{j} \psi_{N}(x)=& \zeta_{N, i j}(x)=\nabla^{i} \nabla^{j} \psi_{N}(x)+N^{2 \gamma / d} \delta_{i j} \psi_{N}(x)
\end{align}
and 
\begin{align}\label{prophiN3}
N^{6 \gamma / d} x_{i} x_{j} x_{q} \psi_{N}(x)=&-\nabla^{i} \nabla^{j} \nabla^{q} \psi_{N}(x)-N^{2 \gamma / d} \delta_{i j} \nabla^{q} \psi_{N}(x)-N^{2 \gamma / d} \delta_{i q} \nabla^{j} \psi_{N}(x) \nonumber\\
&-N^{2 \gamma / d} \delta_{j q} \nabla^{i} \psi_{N}(x), \quad i, j, q=1, \ldots, d.
\end{align}

\begin{lemma}\label{lemmaR1} We have

\[ 
R_{N, 1}\leq C (Q_{t}^{N} +  (Q_{t}^{N})^{3/2} N^{\gamma(d+4) / 2 d}).
\]
  
\end{lemma}

\begin{proof}

From \eqref{prophiN2} we obtain

\begin{align*}
R_{N, 1}=&-\frac{1}{N^{2}} \sum_{k, l=1}^{N} \zeta_{N, q q^{\prime}}\left(X_{t}^{k,N}-X_{t}^{l,N}\right)\left( \digamma_{N, q}^{k}(t)-\digamma_{N, q}^{l}(t)\right)\left( \digamma_{N, q^{\prime}}^{k}(t)-\digamma_{N, q^{\prime}}^{l}(t)\right)\nonumber\\
=&-\frac{2}{N^{2}} \sum_{k, l=1}^{N} \nabla^{q} \nabla^{q^{\prime}} \psi_{N}\left(X_{t}^{k,N}-X_{t}^{l,N}\right) \digamma_{N, q}^{k}(t)  \digamma_{N, q^{\prime}}^{k}(t)\nonumber \\
&+\frac{2}{N^{2}} \sum_{k, l=1}^{N} \nabla^{q} \nabla^{q^{\prime}} \psi_{N}\left(X_{t}^{k,N}-X_{t}^{l,N}\right)\digamma_{N, q}^{k}(t)  \digamma_{N, q^{\prime}}^{l}(t) \nonumber\\
&-\frac{N^{2 \gamma / d}}{N^{2}} \sum_{k, l=1}^{N} \psi_{N}\left(X_{t}^{k,N}-X_{t}^{l,N}\right)\left|\digamma_{N}^{k}(t)-  \digamma_{N}^{l}(t)\right|^{2}.
\end{align*}
We observe that 
\begin{align*}
 \nabla^{q}\nabla^{q^{\prime}}\psi_{N}(x-y)=&
    \nabla^{q}\nabla^{q^{\prime}}(\psi_{N}^{r}\ast\psi_{N}^{r}) (x-y)\\
    =&\nabla^{q}\nabla^{q^{\prime}} \int_{\R^{d}}  \psi_{N}^{r}(x-y-z) \ \psi_{N}^{r}(z) \ dz \\
=&\nabla^{q}\nabla^{q^{\prime}} \int_{\R^{d}}  \psi_{N}^{r}(x-z)\psi_{N}^{r}(z-y) \ dz\\
=&\int_{\R^{d}}  \nabla^{q}\psi_{N}^{r}(x-z) \nabla^{q^{\prime}}\psi_{N}^{r}(z-y) dz \\
=&- \int_{\R^{d}}\nabla^{q}\psi_{N}^{r}(z-x)\nabla^{q}\psi_{N}^{r}(z-y)dz.
\end{align*}
Thus we have 
\begin{align}\label{reescriRN0}
&\frac{2}{N^{2}} \sum_{k, l=1}^{N} \nabla^{q} \nabla^{q^{\prime}} \psi_{N}\left(X_{t}^{k,N}-X_{t}^{l,N}\right)\digamma_{N, q}^{k}(t)  \digamma_{N, q^{\prime}}^{l}(t)\nonumber\\
=&-\frac{2}{N^{2}} \int_{\R^{d}}\sum_{k, l=1}^{N}\nabla^{q}\psi_{N}^{r}(z-X_{t}^{k,N})\nabla^{q}\psi_{N}^{r}\left(z-X_{t}^{l,N}\right)\digamma_{N, q}^{k}(t)  \digamma_{N, q^{\prime}}^{l}(t)dz \nonumber\\
=&-\frac{2}{N^{2}} \int_{\R^{d}}\left(\sum_{k=1}^{N}\nabla^{q}\psi_{N}^{r}(z-X_{t}^{k,N})\digamma_{N, q}^{k}(t)\right)^{2}dz<0.
\end{align}
From  \eqref{reescriRN0} we arrive ate 
\begin{align}\label{reescriRN1}
R_{N, 1}\leq&-\frac{2}{N^{2}} \sum_{k, l=1}^{N} \nabla^{q} \nabla^{q^{\prime}} \psi_{N}\left(X_{t}^{k,N}-X_{t}^{l,N}\right) \digamma_{N, q}^{k}(t)  \digamma_{N, q^{\prime}}^{k}(t)\nonumber \\
&-\frac{N^{2 \gamma / d}}{N^{2}} \sum_{k, l=1}^{N} \psi_{N}\left(X_{t}^{k,N}-X_{t}^{l,N}\right)\left|\digamma_{N}^{k}(t)-  \digamma_{N}^{l}(t)\right|^{2}\nonumber\\
\leq&-\frac{2}{N^{2}} \sum_{k, l=1}^{N} \nabla^{q} \nabla^{q^{\prime}} \psi_{N}\left(X_{t}^{k,N}-X_{t}^{l,N}\right) \digamma_{N, q}^{k}(t)  \digamma_{N, q^{\prime}}^{k}(t).
\end{align}

From \eqref{defzetamatriz}, \eqref{defexp},  \eqref{limite inferiorphi1r}  and using Fourier transform 
we deduce  
\begin{eqnarray}\label{gradientedobleestimatiDN}
&&\left\|\nabla^{q} \nabla^{q^{\prime}}\left(B_{N}\ast\psi_{N}\right)\right\|_{\infty}\nonumber\\
&&\leq C \int_{\mathbb{R}^{d}} \left|\widetilde{B_{N}}(\lambda)\right| \lambda^{2} \exp \left(-\lambda^{2} / 2 N^{2 \gamma / d}\right)d \lambda\nonumber\\
&&\leq C\left(\int_{\mathbb{R}^{d}}\left|\widetilde{B_{N}}(\lambda)\right|^{2} \exp \left(-\lambda^{2} / 2 N^{2 \gamma / d}\right)d \lambda\right)^{1 / 2}\left(\int_{\mathbb{R}^{d}} \lambda^{4} \exp \left(-\lambda^{2} / 2 N^{2 \gamma / d}\right)d \lambda\right)^{1 / 2}\nonumber\\
&&\leq C\left(\int_{\mathbb{R}^{d}}\left|\widetilde{B_{N}}(\lambda)\right|^{2} \exp \left(-\lambda^{2} / 2 N^{2 \beta / d}\right)d \lambda\right)^{1 / 2} N^{\gamma(d+4) / 2 d}\nonumber\\
&&\leq C\left\|B_{N}\ast\phi_{N}^{r}\right\|_{2} N^{\gamma(d+4) / 2 d} \nonumber\\
&&\leq C N^{\gamma(d+4) / 2 d}\left(\left\|b_{N}\right\|_{2}+\left\|\varrho-\varrho \ast \phi_{N}^{r}\right\|_{2}\right)\nonumber\\
&&\leq C N^{\gamma(d+4) / 2 d}\left(\left\|b_{N}\right\|_{2}+N^{-\beta / d}\right)\nonumber\\
&&\leq C\left(N^{\gamma(d+4) / 2 d}\left\|b_{N}\right\|_{2}+1\right).
\end{eqnarray}

Using the same arguments we can obtain

\begin{equation}
    \left\|B_{N}\ast \psi_{N}\right\|_{\infty}\leq C\left(N^{\gamma/2}\left\|b_{N}\right\|_{2}+1\right).
\end{equation}
 
From  \eqref{gradientedobleestimatiDN} and the boundedness of 
 $\upsilon$, and  $\varrho$  we have

 \begin{eqnarray}\label{primertermoRN00}
&&\left|\frac{2}{N^{2}} \sum_{k, l=1}^{N} \nabla^{q} \nabla^{q^{\prime}} \psi_{N}\left(X_{t}^{k,N}-X_{t}^{l,N}\right) \digamma_{N, q}^{k}(t)  \digamma_{N, q^{\prime}}^{k}(t)\right|\nonumber\\
&&\quad=\left|\frac{2}{N} \sum_{k=1}^{N} \int_{\mathbb{R}^{d}}\nabla^{q} \nabla^{q^{\prime}} \psi_{N}(X_{t}^{k,N}-y)  \digamma_{N, q}^{k}(t)  \digamma_{N, q^{\prime}}^{k}(t)\{\varrho(y)(dy)-B_{N}(dy)\}  \right|\nonumber\\
&&\quad\leq\left| \frac{2}{N} \sum_{k=1}^{N}  \nabla^{q} \nabla^{q^{\prime}}\left(\varrho\ast\psi_{N}\right)(X_{t}^{k,N})  \digamma_{N, q}^{k}(t)  \digamma_{N, q^{\prime}}^{k}(t) \right|\nonumber\\
&&\qquad+\left| \frac{2}{N} \sum_{k=1}^{N}  \nabla^{q} \nabla^{q^{\prime}}\left(B_{N}\ast\psi_{N}\right)(X_{t}^{k,N})  \digamma_{N, q}^{k}(t)  \digamma_{N, q^{\prime}}^{k}(t)\right|\nonumber\\
&&\quad\leq \frac{2}{N} \sum_{k=1}^{N} \left|\digamma_{N, q}^{k}(t)\left\|\digamma_{N, q^{\prime}}^{k}(t)\right\| \nabla^{q} \nabla^{q^{\prime}}\left(\varrho\ast\psi_{N}\right)(X_{t}^{k,N}) \right| \nonumber\\
&&\qquad+ \frac{2}{N} \sum_{k=1}^{N}\left|\digamma_{N, q}^{k}(t)\left\|\digamma_{N, q^{\prime}}^{k}(t)\right\| \nabla^{q} \nabla^{q^{\prime}}\left(B_{N}\ast\psi_{N}\right)(X_{t}^{k,N}) \right| \nonumber\\
&&\quad\leq  C\frac{2}{N} \sum_{k=1}^{N}\left|\digamma_{N}^{k}(t)\right|^{2}\left(N^{\gamma(d+4) / 2 d}\left\|b_{N}\right\|_{2}+1\right)\nonumber\\
&&\quad\leq C (Q_{t}^{N} +  (Q_{t}^{N})^{3/2} N^{\gamma(d+4) / 2 d}).
\end{eqnarray}

\end{proof}

\begin{lemma}\label{lemmaR2} We have

\[
\left|R_{N, 2}\right| \leq C\left(Q_{t}^{N}+N^{-2 \beta / d} N^{2 \gamma(d+2) / d}+(Q_{t}^{N})^{3 / 2} N^{\gamma(d+4) / 2 d}\right).
\]

\end{lemma}

\begin{proof}
 We rewrite $R_{N,2}$
\begin{align}\label{R_{N,2}res}
  R_{N,2}
  =& \dfrac{2}{N}\sum_{k=1}^{N}\int_{\mathbb{R}^{d}}(\varrho(y, t)-\left( S_{t}^{N}\ast\phi_{N}^{r}\right)(y) )\left(V_{t}^{k,N}-\upsilon(X_{t}^{k,N},t)\right)\cdot\zeta_{N}(X_{t}^{k,N}-y)\nonumber\\
  &\qquad\qquad \qquad\qquad \qquad\qquad\qquad\qquad \qquad\qquad\left(\upsilon(X_{t}^{k,N}, t)-\upsilon(y, t)\right)dy\nonumber\\
  &-\frac{2}{N}\sum_{k=1}^{N}\int_{\R^{d}}\left(V_{t}^{k,N}-\upsilon(X_{t}^{k,N},t)\right)\cdot\zeta_{N}(X_{t}^{k,N}-y)\left(\upsilon(X_{t}^{k,N},t)-\upsilon(y,t)\right)\nonumber\\
  &\qquad \qquad\qquad\qquad \qquad\qquad\qquad \qquad\qquad\left\{ S_{t}^{N}(d y)-\left( S_{t}^{N}\ast\phi_{N}^{r}\right)(y) d y\right\}\nonumber\\  
  =&-\frac{2}{N} \sum_{k=1}^{N} \int_{\mathbb{R}^{d}}\zeta_{N, q q^{\prime}}(X_{t}^{k,N}-y) \digamma_{N, q}^{k}(t)\left(\upsilon_{q^{\prime}}(X_{N}^{t}(t))-\upsilon_{q^{\prime}}(y)\right)\nonumber\\
  &\qquad\qquad \qquad\qquad\qquad\qquad\qquad \qquad\qquad\left\{ S_{t}^{N}(d y)-\left( S_{t}^{N}\ast\phi_{N}^{r}\right)(y) d y\right\}\nonumber\\
  &+\frac{2}{N} \sum_{k=1}^{N}\digamma_{N, q}^{k}(t) \int_{\mathbb{R}^{d}} b_{N}(y) \zeta_{N, q q^{\prime}}(X_{t}^{k,N}-y)(\upsilon_{q^{\prime}}(X_{t}^{k,N},t)-\upsilon_{q^{\prime}}(y,t))dy.\nonumber\\
\end{align}
We will  estimate the first term of \eqref{R_{N,2}res}. 
 We observe that derivative  of the function  ${y\to\zeta_{N, q q^{\prime}}(x-y) \left(\upsilon_{q^{\prime}}(x,t)-\upsilon_{q^{\prime}}(y,t)\right)}$ is  uniformly bounded by  $CN^{\gamma(d+2) / d}$, it  is can be deduce from \eqref{defzetamatriz},  \eqref{defexp} and the regularity of $\upsilon$  and  $\varrho$. Then we have 
 \begin{align}
&\left| \frac{2}{N} \sum_{k=1}^{N} \int_{\mathbb{R}^{d}}\zeta_{N, q q^{\prime}}(X_{t}^{k,N}-y) \digamma_{N, q}^{k}(t)\left(\upsilon_{q^{\prime}}(X_{N}^{t}(t))-\upsilon_{q^{\prime}}(y)\right)\right.\nonumber\\
&\qquad\qquad\qquad\qquad\qquad\qquad\qquad\qquad\qquad\qquad\left\{ S_{t}^{N}(d y)-\left( S_{t}^{N}\ast\phi_{N}^{r}\right)(y) d y\right\}\Bigg|\nonumber\\
\leq& \frac{2}{N^{2}} \sum_{k,l=1}^{N} | \zeta_{N, q q^{\prime}}(X_{t}^{k,N}-X_{t}^{l,N}) \digamma_{N, q}^{k}(t)\left(\upsilon_{q^{\prime}}(X_{t}^{k,N},t)-\upsilon_{q^{\prime}}(X_{t}^{l,N},t)\right)\nonumber \\
&-\int_{\mathbb{R}^{d}} \phi_{N}^{r}(z-X_{t}^{l,N}) \zeta_{N, q q^{\prime}}(X_{t}^{k,N}-z) \digamma_{N, q}^{k}(t)\left(\upsilon_{q^{\prime}}(X_{t}^{k,N},t)-\upsilon_{q^{\prime}}(z,t)\right) dz|\nonumber \\
\leq& \frac{2}{N^{2}} \sum_{k,l=1}^{N} \left|\digamma_{N, q}^{k}(t)\right|\int_{\mathbb{R}^{d}} \phi_{N}^{r}(z-X_{t}^{l,N}) \left|  \zeta_{N, q q^{\prime}}(X_{t}^{k,N}-X_{t}^{l,N}) \left(\upsilon_{q^{\prime}}(X_{t}^{k,N},t)-\upsilon_{q^{\prime}}(X_{t}^{l,N},t)\right)\right.\nonumber\\
&\quad\left.-\zeta_{N, q q^{\prime}}(X_{t}^{k,N}-z)\left(\upsilon_{q^{\prime}}(X_{t}^{k,N},t)-\upsilon_{q^{\prime}}(z,t)\right)\right|dz \nonumber\\
\leq& \frac{2}{N^{2}} \sum_{k,l=1}^{N} \left|\digamma_{N, q}^{k}(t)\right|CN^{\gamma(d+2) / d}\int_{\mathbb{R}^{d}} |z-X_{t}^{l,N}|\phi_{N}^{r}(z-X_{t}^{l,N})dz \nonumber
\end{align}
\begin{align}\label{RN20}
\leq&  CN^{\gamma(d+2) / d} \frac{2}{N} \sum_{k=1}^{N} \left|\digamma_{N,q}^{k}(t)\right|\int_{\R^{d}}|w|\phi_{N}^{r}(w)dw\nonumber\qquad\qquad\qquad\qquad\qquad\qquad\qquad\qquad\\
\leq& \frac{1}{N}\sum_{k=1}^{N}\left( C^{2}\left|\digamma_{N,q}^{k}(t)\right|^{2} +\left(N^{\gamma(d+2) / d}\int_{\R^{d}}|w|\phi_{N}^{r}(w)dw\right)^{2}\right)\nonumber\\
\leq&  \dfrac{C}{N} \sum_{k=1}^{N} \left|\digamma_{N}^{k}(t)\right|^{2}+N^{-2 \beta / d} N^{2 \gamma(d+2) / d} .
\end{align}

We will  estimate the second term of \eqref{R_{N,2}res}. Adding and subtracting the term

\begin{align}\label{Restrela}
R_{N, q q^{\prime} q^{\prime \prime}}^{*}:=\frac{1}{N} \sum_{k=1}^{N}\digamma_{N, q}^{k}(t) \int_{\mathbb{R}^{d}}  b_{N}(y) \zeta_{N, q q^{\prime}}\left(X_{t}^{k,N}-y\right)\left(X_{t,q^{\prime \prime}}^{k,N}-y_{q^{\prime \prime}}\right) \nabla^{q^{\prime \prime}} \upsilon_{q^{\prime}}(y,t)dy,\nonumber\\
\end{align} 
we obtain that the second term
of \eqref{R_{N,2}res} is equal to 
\begin{align}\label{segutermoRN2}
 &\frac{1}{N} \sum_{k=1}^{N}\digamma_{N, q}^{k}(t) \int_{\mathbb{R}^{d}} b_{N}(y) \zeta_{N, q q^{\prime}}(X_{t}^{k,N}-y)\nonumber\\
&\qquad\qquad\times\left\{\upsilon_{q^{\prime}}(X_{t}^{k,N},t)-\upsilon_{q^{\prime}}(y,t)-\sum_{q^{\prime \prime}=1}^{d}\left(X_{t,q^{\prime \prime}}^{k,N}-y_{q^{\prime \prime}}\right) \nabla^{q^{\prime \prime}} \upsilon_{q^{\prime}}(y,t)\right\}dy\nonumber\\ 
&+\sum_{q^{\prime \prime}=1}^{d}R_{N, q q^{\prime} q^{\prime \prime}}^{*}.
\end{align}

Now, we  will estimated the first term of (\ref{segutermoRN2}). Doing  Taylor expansion of second order in the spatial variable  for $\upsilon$ we have 
\begin{equation*}
\upsilon_{q^{\prime}}(X_{t}^{k,N},t)=\upsilon_{q^{\prime}}(y,t)+  D\upsilon_{q^{\prime}}(y,t) \left(X_{N}^{k}(t)-y\right) +
 D^{2}\upsilon_{q^{\prime}}(y + \theta X_{t}^{k,N},t ) \left(X_{N}^{k}(t)-y\right)^{2} \end{equation*}
with $0<\theta<1$. Thus by boundedness of $\upsilon$ and formula (\ref{defzetamatriz}) we obtain 
\begin{align}
&\left|\frac{1}{N} \sum_{k=1}^{N}\digamma_{N, q}^{k}(t) \int_{\mathbb{R}^{d}} b_{N}(y) \zeta_{N, q q^{\prime}}(X_{t}^{k,N}-y)\right.\nonumber\\
&\qquad\qquad\times\bigg.\left\{\upsilon_{q^{\prime}}(X_{t}^{k,N},t)-\upsilon_{q^{\prime}}(y,t)-\sum_{q^{\prime \prime}=1}^{d}\left(X_{N,q^{\prime \prime}}^{k}(t)-y_{q^{\prime \prime}}\right) \nabla^{q^{\prime \prime}} \upsilon_{q^{\prime}}(y,t)\right\}dy \bigg| \nonumber\\
\leq & C  \frac{1}{N} \sum_{k=1}^{N}\left|\digamma_{N, q}^{k}(t)\right| \int_{\mathbb{R}^{d}} \left|b_{N}(y)\right|\left|X_{t}^{k,N}-y\right|^{4} N^{4 \gamma / d} \psi_{N}\left(X_{t}^{k,N}-y\right) dy. \nonumber\\
\end{align}

 Now, we have 
 
\begin{align}\label{RN21}
& C  \frac{1}{N} \sum_{k=1}^{N}\left|\digamma_{N, q}^{k}(t)\right| \int_{\mathbb{R}^{d}} \left|b_{N}(y)\right|\left|X_{t}^{k,N}-y\right|^{4} N^{4 \gamma / d} \psi_{N}\left(X_{t}^{k,N}-y\right) dy\nonumber\\
\leq & C \frac{1}{N} \sum_{k=1}^{N}\left|\digamma_{N, q}^{k}(t)\right|^{2} \int_{\mathbb{R}^{d}} \left|X_{t}^{k,N}-y\right|^{4} N^{4 \gamma / d} \psi_{N}\left(X_{t}^{k,N}-y\right)dy\nonumber\\
&+C \frac{1}{N} \sum_{k=1}^{N} \int_{\mathbb{R}^{d}} \left|b_{N}(y)\right|^{2}\left|X_{t}^{k,N}-y\right|^{4} N^{4 \gamma / d} \psi_{N}\left(X_{t}^{k,N}-y\right)dy \nonumber\\
=& C \frac{1}{N} \sum_{k=1}^{N}\left|\digamma_{N, q}^{k}(t)\right|^{2} \int_{\mathbb{R}^{d}} \left|X_{t}^{k,N}-y\right|^{4} N^{4 \gamma / d} \psi_{N}\left(X_{t}^{k,N}-y\right)dy\nonumber\\
&+C \int_{\mathbb{R}^{d}} \int_{\mathbb{R}^{d}} \left|b_{N}(y)\right|^{2}\left|x-y\right|^{4} N^{4 \gamma / d} \psi_{N}\left(x-y\right)dy d( S_{t}^{N}(x)) \nonumber\\
\leq & C\left( \frac{1}{N} \sum_{k=1}^{N}\left|\digamma_{N, q}^{k}(t)\right|^{2}+\left\| S_{t}^{N} \ast \psi_{N} \ast \psi_{N}\right\|_{\infty}\left\|b_{N}\right\|_{2}^{2}\right)\nonumber\\
\leq& C\left( \frac{1}{N} \sum_{k=1}^{N}\left|\digamma_{N, q}^{k}(t)\right|^{2}+\left(\left\|\varrho \ast \psi_{N}\right\|_{\infty}+\left\|B_{N} \ast \psi_{N}\right\|_{\infty}\right)\left\|b_{N}\right\|_{2}^{2}\right)\nonumber\\
\leq& C\left( \frac{1}{N} \sum_{k=1}^{N}\left|\digamma_{N, q}^{k}(t)\right|^{2}+\left(N^{\gamma / 2}\left\|b_{N}\right\|_{2}+1\right)\left\|b_{N}\right\|_{2}^{2}\right).
\end{align}

where we used that {\color{red}{ $\| B_{N}\ast \psi_{N}\|_{\infty}\leq   C( N^{\gamma / 2}\ \| b_{N}\ast \|_{2} + 1)$}}.
We will  estimate the second term of  \eqref{segutermoRN2}. Multiplying 
  \eqref{prophiN2} by  $w_{q^{\prime \prime}}$ substituting in  \eqref{prophiN3} we obtain 
\begin{align*}
\zeta_{N, q q^{\prime}}(w) w_{q^{\prime \prime}}&=-N^{-2 \gamma / d} \nabla^{q} \nabla^{q^{\prime}} \nabla^{q^{\prime \prime}} \psi_{N}(w)-\delta_{q q^{\prime}} \nabla^{q^{\prime \prime}} \psi_{N}(w)-\delta_{q q^{\prime \prime}} \nabla^{q^{\prime}} \psi_{N}(w)\\
&-\delta_{q^{\prime} q^{\prime \prime}} \nabla^{q} \psi_{N}(w).
\end{align*}
Thus we have
\begin{align*}
&\zeta_{N, q q^{\prime}}(w) w_{q^{\prime \prime}} \\
=&  -N^{-2 \gamma / d} \left(\nabla^{q}\psi_{N}^{r}\ast\nabla^{q^{\prime}}\nabla^{q^{\prime \prime}} \psi_{N}\right)(w)-\delta_{q q^{\prime}} \left(\nabla^{q^{\prime \prime}} \psi_{N}^{r}\ast\psi_{N}^{r}\right)(w)\\
&-\delta_{q q^{\prime \prime}}\left( \nabla^{q^{\prime}} \psi_{N}^{r}\ast\psi_{N}^{r}\right)(w)-\delta_{q^{\prime} q^{\prime \prime}} \left(\nabla^{q}\psi_{N}^{r}\ast\psi_{N}^{r}\right)(w)\nonumber\\
=& -\int_{\mathbb{R}^{d}} \nabla^{q} \psi_{N}^{r}\left(x\right) \nabla^{q^{\prime \prime}}  N^{-2 \gamma / d} \nabla^{q^{\prime}} \nabla^{q^{\prime \prime}} \psi_{N}^{r}(w-x)dx-\delta_{q q^{\prime}} \int_{\mathbb{R}^{d}}\nabla^{q^{\prime\prime}} \psi_{N}^{r}\left(x\right)\psi_{N}^{r}(w-x)dx\\
&-\delta_{q q^{\prime\prime}} \int_{\mathbb{R}^{d}}\nabla^{q} \psi_{N}^{r}\left(x\right)\psi_{N}^{r}(w-x)dx-\delta_{q^{\prime} q^{\prime\prime}} \int_{\mathbb{R}^{d}}\nabla^{q} \psi_{N}^{r}\left(x\right)\psi_{N}^{r}(w-x)dx.
\end{align*}

Then we deduce

\begin{align*}
R_{N, q q^{\prime} q^{\prime \prime}}^{*}=& -\frac{1}{N} \sum_{k=1}^{N}\digamma_{N, q}^{k}(t) \int_{\mathbb{R}^{d}}  b_{N}(y)\int_{\mathbb{R}^{d}} \nabla^{q} \psi_{N}^{r}\left(x\right) \nabla^{q^{\prime \prime}}  N^{-2 \gamma / d}\\
&\quad\qquad\qquad\qquad\qquad\qquad \times\nabla^{q^{\prime}} \nabla^{q^{\prime \prime}} \psi_{N}^{r}(X_{t}^{k,N}-y-x)\nabla^{q^{\prime \prime}} \upsilon_{q^{\prime}}(y,t)dxdy\\
&-\frac{1}{N} \sum_{k=1}^{N}\digamma_{N, q}^{k}(t) \int_{\mathbb{R}^{d}}  b_{N}(y)\delta_{q q^{\prime}} \int_{\mathbb{R}^{d}}\nabla^{q^{\prime\prime}} \psi_{N}^{r}\left(x\right)\\
&\qquad\qquad\qquad\qquad\qquad\qquad\qquad\times\psi_{N}^{r}(X_{t}^{k,N}-y-x)\nabla^{q^{\prime \prime}} \upsilon_{q^{\prime}}(y,t)dxdy\\
&-\frac{1}{N} \sum_{k=1}^{N}\digamma_{N, q}^{k}(t) \int_{\mathbb{R}^{d}}  b_{N}(y) \delta_{q q^{\prime\prime}} \int_{\mathbb{R}^{d}}\nabla^{q} \psi_{N}^{r}\left(x\right)\\
&\qquad\qquad\qquad\qquad\qquad\qquad\qquad\times\psi_{N}^{r}(X_{t}^{k,N}-y-x)\nabla^{q^{\prime \prime}} \upsilon_{q^{\prime}}(y,t)dxdy\\
&-\frac{1}{N} \sum_{k=1}^{N}\digamma_{N, q}^{k}(t) \int_{\mathbb{R}^{d}}  b_{N}(y) \delta_{q^{\prime} q^{\prime\prime}} \int_{\mathbb{R}^{d}}\nabla^{q} \psi_{N}^{r}\left(x\right)\\
&\qquad\qquad\qquad\qquad\qquad\qquad\qquad\times \psi_{N}^{r}(X_{t}^{k,N}-y-x)\nabla^{q^{\prime \prime}} \upsilon_{q^{\prime}}(y,t)dxdy.  
\end{align*}

Doing the  change of variable  $z=-x+X_{t}^{k,N}$,  we get

\begin{align*}
R_{N, q q^{\prime} q^{\prime \prime}}^{*}=&-\frac{1}{N} \sum_{k=1}^{N}\digamma_{N, q}^{k}(t) \int_{\mathbb{R}^{d}}  b_{N}(y)\int_{\mathbb{R}^{d}} \nabla^{q} \psi_{N}^{r}\left(-(z-X_{t}^{k,N})\right)   N^{-2 \gamma / d} \\
&\quad\qquad\qquad\qquad\qquad\qquad\qquad\qquad\times\nabla^{q^{\prime}} \nabla^{q^{\prime \prime}} \psi_{N}^{r}(z-y)\nabla^{q^{\prime \prime}} \upsilon_{q^{\prime}}(y,t)dzdy\\
&-\frac{1}{N} \sum_{k=1}^{N}\digamma_{N, q}^{k}(t) \int_{\mathbb{R}^{d}}  b_{N}(y)\delta_{q q^{\prime}} \int_{\mathbb{R}^{d}}\nabla^{q^{\prime\prime}} \psi_{N}^{r}\left(-(z-X_{t}^{k,N})\right)\\
&\qquad\qquad\qquad\qquad\qquad\qquad\qquad\qquad\qquad\times\psi_{N}^{r}(z-y)\nabla^{q^{\prime \prime}} \upsilon_{q^{\prime}}(y,t)dzdy\\
&-\frac{1}{N} \sum_{k=1}^{N}\digamma_{N, q}^{k}(t) \int_{\mathbb{R}^{d}}  b_{N}(y) \delta_{q q^{\prime\prime}} \int_{\mathbb{R}^{d}}\nabla^{q} \psi_{N}^{r}\left(-(z-x_{t}^{k,N})\right)\\
&\qquad\qquad\qquad\qquad\qquad\qquad\qquad\qquad\qquad\times\psi_{N}^{r}(z-y)\nabla^{q^{\prime \prime}} \upsilon_{q^{\prime}}(y,t)dzdy\\
&-\frac{1}{N} \sum_{k=1}^{N}\digamma_{N, q}^{k}(t) \int_{\mathbb{R}^{d}}  b_{N}(y) \delta_{q^{\prime} q^{\prime\prime}} \int_{\mathbb{R}^{d}}\nabla^{q} \psi_{N}^{r}\left(-(z-X_{t}^{k,N})\right)\\
&\qquad\qquad\qquad\qquad\qquad\qquad\qquad\qquad\qquad\times\psi_{N}^{r}(z-y)\nabla^{q^{\prime \prime}} \upsilon_{q^{\prime}}(y,t)dzdy.  
\end{align*}

Since
\begin{equation*}
\nabla^{q}\psi_{N}^{r}(x)=-\nabla^{q}\psi_{N}^{r}(-x)\qquad q=1,\cdots, d,    
\end{equation*}

we have
\begin{align*}
R_{N, q q^{\prime} q^{\prime \prime}}^{*}=&\frac{1}{N} \sum_{k=1}^{N}\digamma_{N, q}^{k}(t) \int_{\mathbb{R}^{d}}  b_{N}(y)\int_{\mathbb{R}^{d}} \nabla^{q} \psi_{N}^{r}\left(z-X_{t}^{k,N}\right)   N^{-2 \gamma / d} \nabla^{q^{\prime}} \nabla^{q^{\prime \prime}} \psi_{N}^{r}(z-y)\\
&\qquad\qquad\qquad\qquad\qquad\qquad\qquad\qquad\qquad\qquad\qquad\times\nabla^{q^{\prime \prime}} \upsilon_{q^{\prime}}(y,t)dzdy\\
&+\frac{1}{N} \sum_{k=1}^{N}\digamma_{N, q}^{k}(t) \int_{\mathbb{R}^{d}}  b_{N}(y)\delta_{q q^{\prime}} \int_{\mathbb{R}^{d}}\nabla^{q^{\prime\prime}} \psi_{N}^{r}\left(z-X_{t}^{k,N}\right)\psi_{N}^{r}(z-y)\\
&\qquad\qquad\qquad\qquad\qquad\qquad\qquad\qquad\qquad\qquad\qquad\times\nabla^{q^{\prime \prime}} \upsilon_{q^{\prime}}(y,t)dzdy\\
&+\frac{1}{N} \sum_{k=1}^{N}\digamma_{N, q}^{k}(t) \int_{\mathbb{R}^{d}}  b_{N}(y) \delta_{q q^{\prime\prime}} \int_{\mathbb{R}^{d}}\nabla^{q} \psi_{N}^{r}\left(z-x_{t}^{k,N}\right)\psi_{N}^{r}(z-y)\\
&\qquad\qquad\qquad\qquad\qquad\qquad\qquad\qquad\qquad\qquad\qquad\times\nabla^{q^{\prime \prime}} \upsilon_{q^{\prime}}(y,t)dzdy\\
&+\frac{1}{N} \sum_{k=1}^{N}\digamma_{N, q}^{k}(t) \int_{\mathbb{R}^{d}}  b_{N}(y) \delta_{q^{\prime} q^{\prime\prime}} \int_{\mathbb{R}^{d}}\nabla^{q} \psi_{N}^{r}\left(z-X_{t}^{k,N}\right)\psi_{N}^{r}(z-y)\\
&\qquad\qquad\qquad\qquad\qquad\qquad\qquad\qquad\qquad\qquad\qquad\times\nabla^{q^{\prime \prime}} \upsilon_{q^{\prime}}(y,t)dzdy.  
\end{align*}

Therefore we obtain

\begin{eqnarray*}
\left|R_{N, q q^{\prime} q^{\prime \prime}}^{*}\right| &\leq& \left| \int_{\mathbb{R}^{d}} \left(\frac{1}{N} \sum_{k=1}^{N}\digamma_{N, q}^{k}(t)  \nabla^{q} \psi_{N}^{r}\left(z-X_{t}^{k,N}\right)\right)\right. \nonumber\\
&&\qquad\times \left.\left(\int_{\mathbb{R}^{d}} b_{N}(y) \nabla^{q^{\prime \prime}} \upsilon_{q^{\prime}}(y) N^{-2 \gamma / d} \nabla^{q^{\prime}} \nabla^{q^{\prime \prime}} \psi_{N}^{r}(z-y)dy\right)dz \right| \nonumber\\
&+& \left| \int_{\mathbb{R}^{d}} \left(\frac{1}{N} \sum_{k=1}^{N}\digamma_{N, q}^{k}(t)  \nabla^{q^{\prime\prime}} \psi_{N}^{r}\left(z-X_{t}^{k,N}\right)\right)\right.\nonumber\\
& &\qquad\left.\times\left(\int_{\mathbb{R}^{d}}  b_{N}(y) \nabla^{q^{\prime \prime}} \upsilon_{q^{\prime}}(y) \psi_{N}^{r}(z-y)dy\right) dz\right| \delta_{q q^{\prime}}\nonumber\\
&+&\left|\int_{\mathbb{R}^{d}}\left(\frac{1}{N} \sum_{k=1}^{N}\digamma_{N, q}^{k}(t)  \nabla^{q} \psi_{N}^{r}\left(z-X_{t}^{k,N}\right)\right)\right.\nonumber \\
&&\qquad\left.\times\left(\int_{\mathbb{R}^{d}}  b_{N}(y) \nabla^{q^{\prime \prime}} \upsilon_{q^{\prime}}(y) \psi_{N}^{r}(z-y)dy\right) dz\right| \delta_{q q^{\prime \prime}}\nonumber \\
&&+\left| \int_{\mathbb{R}^{d}} \left(\frac{1}{N} \sum_{k=1}^{N}\digamma_{N, q}^{k}(t)  \nabla^{q} \psi_{N}^{r}\left(z-X_{t}^{k,N}\right)\right)\right.\nonumber \\
&&\qquad\left.\times\left(\int_{\mathbb{R}^{d}} b_{N}(y) \nabla^{q^{\prime \prime}} \upsilon_{q^{\prime}}(y) \psi_{N}^{r}(z-y)dy\right) dz\right| \delta_{q^{\prime} q^{\prime \prime}}.
\end{eqnarray*}

By   Holder  inequality we have
\begin{align}\label{RN22}
\left|R_{N, q q^{\prime} q^{\prime \prime}}^{*}\right| \leq& \left( \int_{\mathbb{R}^{d}} \left(\frac{1}{N} \sum_{k=1}^{N}\digamma_{N, q}^{k}(t)  \nabla^{q} \psi_{N}^{r}\left(z-X_{t}^{k,N}\right)\right)^{2}dz\right)^{1/2} \nonumber\\
&\qquad\times \left(\int_{\mathbb{R}^{d}}\left(\int_{\mathbb{R}^{d}} b_{N}(y) \nabla^{q^{\prime \prime}} \upsilon_{q^{\prime}}(y) N^{-2 \gamma / d} \nabla^{q^{\prime}} \nabla^{q^{\prime \prime}} \psi_{N}^{r}(z-y)dy\right)^{2}dz \right)^{1/2} \nonumber\\
&+ \left( \int_{\mathbb{R}^{d}} \left(\frac{1}{N} \sum_{k=1}^{N}\digamma_{N, q}^{k}(t)  \nabla^{q^{\prime\prime}} \psi_{N}^{r}\left(z-X_{t}^{k,N}\right)\right)^{2}dz\right)^{1/2}\nonumber\\
&\qquad\times\left(\int_{\mathbb{R}^{d}}\left(\int_{\mathbb{R}^{d}}  b_{N}(y) \nabla^{q^{\prime \prime}} \upsilon_{q^{\prime}}(y) \psi_{N}^{r}(z-y)dy\right)^{2} dz\right)^{1/2} \delta_{q q^{\prime}}\nonumber\\
&+\left(\int_{\mathbb{R}^{d}}\left(\frac{1}{N} \sum_{k=1}^{N}\digamma_{N, q}^{k}(t)  \nabla^{q} \psi_{N}^{r}\left(z-X_{t}^{k,N}\right)\right)^{2}dz\right)^{1/2}\nonumber \\
&\qquad\times\left(\int_{\mathbb{R}^{d}}\left(\int_{\mathbb{R}^{d}}  b_{N}(y) \nabla^{q^{\prime \prime}} \upsilon_{q^{\prime}}(y) \psi_{N}^{r}(z-y)dy\right)^{2} dz\right)^{1/2} \delta_{q q^{\prime \prime}}\nonumber \\
&+\left(\int_{\mathbb{R}^{d}} \left(\frac{1}{N} \sum_{k=1}^{N}\digamma_{N, q}^{k}(t)  \nabla^{q} \psi_{N}^{r}\left(z-X_{t}^{k,N}\right)\right)^{2}dz\right)^{1/2}\nonumber \\
&\qquad\times\left(\int_{\mathbb{R}^{d}}\left(\int_{\mathbb{R}^{d}} b_{N}(y) \nabla^{q^{\prime \prime}} \upsilon_{q^{\prime}}(y) \psi_{N}^{r}(z-y)dy\right)^{2} dz\right)^{1/2} \delta_{q^{\prime} q^{\prime \prime}}.
\end{align}

Using convolution inequality   and that  $\nabla \upsilon$
is bounded  we obtain

\begin{equation}\label{RN23}
 \int_{\mathbb{R}^{d}} \left(\int_{\mathbb{R}^{d}} b_{N}(y) \nabla^{q^{\prime \prime}} \upsilon_{q^{\prime}}(y) \psi_{N}^{r}(z-y)dy\right)^{2}dz\leq C\vert|b_{N}\vert|_{L^{2}}^{2}.  
\end{equation}

Using convolution inequality  and  $N^{-2 \gamma / d}\left|\nabla^{q^{\prime}} \nabla^{q^{\prime \prime}} \psi_{N}^{r}(w)\right| \leq C \psi_{N}(w)$,
 we have 

\begin{equation}\label{RN24}
\int_{\mathbb{R}^{d}}\left(\int_{\mathbb{R}^{d}}  b_{N}(y) \nabla^{q^{\prime \prime}} \upsilon_{q^{\prime}}(y) N^{-2 \gamma / d} \nabla^{q^{\prime}} \nabla^{q^{\prime \prime}} \psi_{N}^{r}(z-y)dy\right)^{2} dz\leq C\left\|b_{N}\right\|_{2}^{2}.    
\end{equation}

From  \eqref{sigamNr} and Young inequality, we have

\begin{eqnarray*}
&&\int_{\mathbb{R}^{d}}  \left(\frac{1}{N} \sum_{k=1}^{N}\digamma_{N, q}^{k}(t)  \nabla^{q^{\prime\prime}} \psi_{N}^{r}\left(z-X_{t}^{k,N}\right)\right)^{2} dz\nonumber\\
&&=-\frac{1}{N^{2}} \sum_{k,l=1}^{N}\digamma_{N, q}^{k}(t) \digamma_{N, q}^{l}(t) \nabla^{q^{\prime \prime}} \nabla^{q^{\prime \prime}} \psi_{N}\left(X_{t}^{k,N}-X_{t}^{l,N}\right)\nonumber\\
&&\leq\frac{1}{N^{2}} \sum_{k,l=1}^{N}\left|\digamma_{N, q}^{k}(t) \right|\left|\digamma_{N, q}^{l}(t)\right|\left| \nabla^{q^{\prime \prime}} \nabla^{q^{\prime \prime}} \psi_{N}\left(X_{t}^{k,N}-X_{t}^{l,N}\right)\right|\nonumber\\
&&\leq\frac{1}{2N^{2}} \sum_{k,l=1}^{N}\left(\left|\digamma_{N, q}^{k}(t) \right|^{2}+\left|\digamma_{N, q}^{l}(t)\right|^{2}\right)\left| \nabla^{q^{\prime \prime}} \nabla^{q^{\prime \prime}} \psi_{N}\left(X_{t}^{k,N}-X_{t}^{l,N}\right)\right|.
\end{eqnarray*}

From \eqref{DNdN} and 
$ \nabla^{q}\nabla^{q}\psi_{N}(x)=\nabla^{q}\nabla^{q}\psi_{N}(-x)\qquad q=1,\cdots, d$     
we obtain 
\begin{eqnarray*}
&&\int_{\mathbb{R}^{d}}  \left(\frac{1}{N} \sum_{k=1}^{N}\digamma_{N, q}^{k}(t)  \nabla^{q^{\prime\prime}} \psi_{N}^{r}\left(z-X_{t}^{k,N}\right)\right)^{2} dz\nonumber\\
&&\leq\frac{1}{N^{2}} \sum_{k,l=1}^{N}\left|\digamma_{N, q}^{k}(t) \right|^{2}\left| \nabla^{q^{\prime \prime}} \nabla^{q^{\prime \prime}} \psi_{N}\left(X_{t}^{k,N}-X_{t}^{l,N}\right)\right|\nonumber\\
&&=\frac{1}{N^{2}} \sum_{k=1}^{N}\left|\digamma_{N, q}^{k}(t) \right|^{2}\left|\int_{\R^{d}} \nabla^{q^{\prime \prime}} \nabla^{q^{\prime \prime}} \psi_{N}\left(X_{t}^{k,N}-y\right)\{\varrho(y)(dy)-B_{N}(dy)\}\right|\nonumber\\
&&\leq\frac{1}{N^{2}} \sum_{k=1}^{N}\left|\digamma_{N, q}^{k}(t) \right|^{2}\left|\nabla^{q^{\prime \prime}} \nabla^{q^{\prime \prime}} (\varrho\ast\psi_{N})\left(X_{t}^{k,N}\right)\right|\nonumber\\
&&\quad+\frac{1}{N^{2}} \sum_{k=1}^{N}\left|\digamma_{N, q}^{k}(t) \right|^{2}\left|\nabla^{q^{\prime \prime}} \nabla^{q^{\prime \prime}} (B_{N}\ast\psi_{N})\left(X_{t}^{k,N}\right)\right|.
\end{eqnarray*}

From \eqref{gradientedobleestimatiDN}, we have
\begin{align}\label{RN25}
\int_{\mathbb{R}^{d}}  \left(\frac{1}{N} \sum_{k=1}^{N}\digamma_{N, q}^{k}(t)  \nabla^{q^{\prime\prime}} \psi_{N}^{r}\left(z-X_{t}^{k,N}\right)\right)^{2} dz\leq  \frac{C}{N} \sum_{k=1}^{N}\left|\digamma_{N}^{k}(t)\right|^{2}\left(N^{\gamma(d+4) / 2 d}\left\|b_{N}\right\|_{2}+1\right).
\end{align}
From \eqref{RN22}-\eqref{RN25} we  deduce
\begin{eqnarray}\label{estimativaRNestrela}
\left|R_{N, q q^{\prime} q^{\prime \prime}}^{*}\right| \leq C\left\|b_{N}\right\|_{2}\left(\frac{1}{N} \sum_{k=1}^{N}\left|\digamma_{N}^{k}(t)\right|^{2}\left(N^{\gamma(d+4) / 2 d}\left\|b_{N}\right\|_{2}+1\right)\right)^{1/2}.
\end{eqnarray}

Therefore from  \eqref{RN20}, \eqref{RN21} e \eqref{estimativaRNestrela}, we obtain

\begin{align}
\left|R_{N, 2}\right| &\leq C\left\{\frac{1}{N} \sum_{k=1}^{N}\left|\digamma_{N}^{k}(t)\right|^{2} +N^{-2 \beta / d} N^{2 \gamma(d+2) / d}+\left\|b_{N}\right\|_{2}^{2}+\left\|b_{N}\right\|_{2}^{3} N^{\gamma / 2}\right.\nonumber \\
&\quad+\left\|b_{N}\right\|_{2}\left(\left.\frac{1}{N} \sum_{k=1}^{N}\left|\digamma_{N}^{k}(t)\right|^{2}\left(N^{\gamma(d+4) / 2 d}\left\|b_{N}\right\|_{2}+1\right)\right)^{1 / 2}\right\}. \nonumber\\
\end{align}

We observe that 
\begin{align*}
 \frac{1}{N} \sum_{k=1}^{N}&\left|\digamma_{N}^{k}(t)\right|^{2} +N^{-2 \beta / d} N^{2 \gamma(d+2) / d}+\left\|b_{N}\right\|_{2}^{2}+\left\|b_{N}\right\|_{2}^{3} N^{\gamma / 2}\\
 &\leq C\left(Q_{t}^{N}+N^{-2 \beta / d} N^{2 \gamma(d+2) / d}+(Q_{t}^{N})^{3 / 2} N^{\gamma(d+4) / 2 d}\right), 
\end{align*}
and 
\begin{align*}
\left\|b_{N}\right\|_{2}
&\left(\frac{1}{N} \sum_{k=1}^{N}\left|\digamma_{N}^{k}(t)\right|^{2}\left(N^{\gamma(d+4) / 2 d} \left\|b_{N}\right\|_{2}+1\right)\right)^{1/2}\\
&\leq \left\|b_{N}\right\|_{2}
\left(\frac{1}{N} \sum_{k=1}^{N}\left|\digamma_{N}^{k}(t)\right|^{2}\left(N^{\gamma(d+4) / 2 d} \left\|b_{N}\right\|_{2}+1\right)^{2}\right)^{1/2}\\
&\leq 2 \left\|b_{N}\right\|_{2}
\left(\frac{1}{N} \sum_{k=1}^{N}\left|\digamma_{N}^{k}(t)\right|^{2}\left(N^{\gamma(d+4) / d} \left\|b_{N}\right\|_{2}^{2}+1\right)\right)^{1/2}\\
& \leq 2 \left\|b_{N}\right\|_{2} (Q_{t}^{N})^{1/2} + N^{\gamma(d+4) / 2 d} \left\|b_{N}\right\|_{2}  Q_{t}^{N}\\
& \leq 2 Q_{t}^{N}+ N^{\gamma(d+4) / 2 d}   (Q_{t}^{N})^{3/2}
\end{align*}
Then we conclude 
\begin{align*}
\left|R_{N, 2}\right|\leq  C\left(Q_{t}^{N}+N^{-2 \beta / d} N^{2 \gamma(d+2) / d}+(Q_{t}^{N})^{3 / 2} N^{\gamma(d+4) / 2 d}\right).     
\end{align*}
\end{proof}

\begin{lemma}\label{lemmaR3} We have

\begin{eqnarray}
        |R_{N,3}|
        &\leq&C\left(N^{-\gamma/d}+\frac{1}{N}\sum_{k=1}^{N}|V_{t}^{k,N}-\upsilon(X_{t}^{k,N},t)|^{2}\right).
    \end{eqnarray}

\end{lemma}

\begin{proof}

 Let 

\begin{eqnarray*}
    R:=\left[\frac{\delta d}{4\gamma}\right]. 
\end{eqnarray*}

Using Taylor expansions for   $\varrho$ and  $\upsilon$  and the definition of   $\zeta_N$ we have 

\begin{align*}
-\int_{R^{d}}  &\varrho(y, t)\zeta_{N, q q^{\prime}}(x-y)(\upsilon_{q^{\prime}}(y, t)-\upsilon_{q^{\prime}}(x, t))dy\\
&=\int_{\R^{d}}\left[\sum_{|\alpha|=0}^{2R+3}\frac{1}{\alpha !}(y-x)^{\alpha}\frac{ \partial^{\alpha}\varrho}{\partial x^{\alpha}}(x,t)+\sum_{|\alpha|=2R+4}R_{\varrho,\alpha}(y)(y-x)^{\alpha}\right]\zeta_{N, q q^{\prime}}(x-y)\\
&\qquad\qquad\quad\times\left[\sum_{|\beta|=1}^{2R+3}\frac{1}{\beta !}(y-x)^{\beta}\frac{ \partial^{\beta}\upsilon_{q^{\prime}}}{\partial x^{\beta}}(x,t)+\sum_{|\beta|=2R+4}R_{\upsilon,\beta}(y)(y-x)^{\beta}\biggl.\right]dy\\
&=\sum_{|\alpha|=1}^{2R+3}\frac{1}{\alpha!}\varrho(x,t)\frac{ \partial^{\alpha}\upsilon_{q^{\prime}}}{\partial x^{\alpha}}(x,t)h_{N,q,q^{\prime}}^{\alpha}+\sum_{|\alpha|,|\beta|=1}^{2R+3}\frac{1}{\alpha !\beta!}\frac{ \partial^{\alpha}\varrho}{\partial x^{\alpha}}(x,t)\frac{ \partial^{\beta}\upsilon_{q^{\prime}}}{\partial x^{\beta}}(x,t)h_{N,q,q^{\prime}}^{\alpha+\beta}\\
&\qquad+\sum_{|\alpha|=2R+4}\sum_{|\beta|=1}^{2R+3}\frac{1}{\beta !}\frac{\partial^{\beta}\upsilon_{q^{\prime}}}{\partial x^{\beta}}(x,t)\int_{\R^{d}}R_{\varrho,\alpha}(y)(y-x)^{\alpha+\beta}\zeta_{N, q q^{\prime}}(x-y)dy\\
&\qquad+\sum_{|\alpha|,|\beta|=2R+4}\int_{\R^{d}}R_{\varrho,\alpha}(y)R_{\upsilon,\beta}(y)(y-x)^{\alpha+\beta}\zeta_{N, q q^{\prime}}(x-y)dy\\
&\qquad+\sum_{|\alpha|=0}^{2R+3}\sum_{|\beta|=2R+4}\frac{1}{\alpha !}\frac{ \partial^{\alpha}\varrho}{\partial x^{\alpha}}(x,t)\int_{\R^{d}}R_{\upsilon,\beta}(y)(y-x)^{\alpha+\beta}\zeta_{N, q q^{\prime}}(x-y)dy,
\end{align*}

where 
\begin{align*}
h_{ N,q,q^{\prime}}^{\alpha}\\
=&\frac{N^{\gamma(d+4)/d}}{(2\pi)^{d/2}}\int_{\R^{d}}x^{\alpha}x_{q}x_{q^{\prime}}e^{-(N^{\gamma/d}|x|)^{2}/2}dx\\
=&N^{\gamma(d+4)/d}i^{-(|\alpha|+2)}\frac{\partial^{\alpha}}{\partial\xi^{\alpha}}\nabla^{q}\nabla^{q^{\prime}}\left(\frac{1}{N^{\gamma}}e^{-(N^{-\gamma/d}|\xi|)^{2}/2}\right)(0)\\
=&(-1)^{|\alpha|+1}i^{|\alpha|}N^{4\gamma/d}\frac{\partial^{\alpha}}{\partial\xi^{\alpha}}\nabla^{q}\nabla^{q^{\prime}}\left(e^{-(N^{-\gamma/d}|\xi|)^{2}/2}\right)(0)\\
=&(-1)^{|\alpha|+1}i^{|\alpha|}N^{4\gamma/d}\frac{\partial^{{\alpha}^{q,q^{\prime}}}}{\partial\xi^{{\alpha}^{q,q^{\prime}}}}\left(e^{-(N^{-\gamma/d}|\xi|)^{2}/2}\right)(0)\\
=&\left\{
\begin{array}{lc}
0\quad\qquad\qquad\qquad\qquad\qquad\qquad\qquad\qquad\quad\quad\quad\text{ se }\alpha_{i}^{q,q^{\prime}}\text{ is odd for some }i\\[0.3cm]
(-2)^{-|\alpha^{q,q^{\prime}|}/2}(-1)^{|\alpha|+1}i^{|\alpha|}N^{\gamma(4-|\alpha^{q,q^{\prime}}|)/d}\frac{\alpha^{q,q^{\prime}}!}{\left(\alpha^{q,q^{\prime}}/2\right)!}\quad\text{se }\alpha_{i}^{q,q^{\prime}}\text{ is  even for all } i
\end{array}
\right.
\end{align*}
with ${\alpha}^{q,q^{\prime}}=\left(\alpha_{1},\cdots,\alpha_{q-1},\alpha_{q}+1,\alpha_{q+1},\cdots,\alpha_{q^{\prime}-1},\alpha_{q^{\prime}}+1,\alpha_{q^{\prime}+1},\cdots,\alpha_{d}\right)$.\\

We observe  that for ${\alpha}^{q,q^{\prime}}\geq 4$ we have
\begin{equation*}
    |h_{N,q,q^{\prime}}^{\alpha}|\leq C N^{-\lambda/d}.
\end{equation*}
We have 
\begin{align*}
&\sum_{|\alpha|=1}^{2R+3}\frac{1}{\alpha!}\varrho(x,t)\frac{ \partial^{\alpha}\upsilon_{q^{\prime}}}{\partial x^{\alpha}}(x,t)h_{N,q,q^{\prime}}^{\alpha}=\sum_{|\alpha|=2}^{2R+3}\frac{1}{\alpha!}\varrho(x,t)\frac{ \partial^{\alpha}\upsilon_{q^{\prime}}}{\partial x^{\alpha}}(x,t)h_{N,q,q^{\prime}}^{\alpha}\\
=&\sum_{\stackrel{q^{\prime}=1}{q^{\prime}\neq q}}^{d}\varrho(x,t)\nabla^{q}\nabla^{q^{\prime}}\upsilon_{q^{\prime}}(x,t)+\sum_{\stackrel{l=1}{l\neq q}}^{d}\frac{1}{2}\varrho(x,t)\nabla^{l}\nabla^{l}\upsilon_{q}(x,t)+\frac{3}{2}\varrho(x,t)\nabla^{q}\nabla^{q}\upsilon_{q}(x,t)\\
&+\sum_{q^{\prime}=1}^{d}\sum_{|\alpha|=3}^{2R+3}\frac{1}{\alpha!}\varrho(x,t)\frac{ \partial^{\alpha}\upsilon_{q^{\prime}}}{x^{\alpha}}(x,t)h_{N,q,q^{\prime}}^{\alpha}\\
=&\sum_{\stackrel{q^{\prime}=1}{q^{\prime}\neq q}}^{d}\varrho(x,t)\nabla^{q}\nabla^{q^{\prime}}\upsilon_{q^{\prime}}(x,t)+\sum_{\stackrel{q^{\prime}=1}{q^{\prime}\neq q}}^{d}\frac{1}{2}\varrho(x,t)\nabla^{q^{\prime}}\nabla^{q^{\prime}}\upsilon_{q}(x,t)+\frac{3}{2}\varrho(x,t)\nabla^{q}\nabla^{q}\upsilon_{q}(x,t)\\
&+\sum_{q^{\prime}=1}^{d}\sum_{|\alpha|=3}^{2R+3}\frac{1}{\alpha!}\varrho(x,t)\frac{ \partial^{\alpha}\upsilon_{q^{\prime}}}{x^{\alpha}}(x,t)h_{N,q,q^{\prime}}^{\alpha}\\
=&\sum_{q^{\prime}=1}^{d}\varrho(x,t)\nabla^{q}\nabla^{q^{\prime}}\upsilon_{q^{\prime}}(x,t)+\sum_{q^{\prime}=1}^{d}\frac{1}{2}\varrho(x,t)\nabla^{q^{\prime}}\nabla^{q^{\prime}}\upsilon_{q}(x,t)\\
&+\sum_{q^{\prime}=1}^{d}\sum_{|\alpha|=3}^{2R+3}\frac{1}{\alpha!}\varrho(x,t)\frac{ \partial^{\alpha}\upsilon_{q^{\prime}}}{x^{\alpha}}(x,t)h_{N,q,q^{\prime}}^{\alpha}
\end{align*}

and

\begin{align*}
&\sum_{|\alpha|,|\beta|=1}^{2R+3}\frac{1}{\alpha !\beta!}\frac{ \partial^{\alpha}\varrho}{\partial x^{\alpha}}(x,t)\frac{ \partial^{\beta}\upsilon_{q}}{\partial  x^{\beta}}(x,t)h_{N,q,q^{\prime}}^{\alpha+\beta}\\
=&\sum_{\stackrel{q^{\prime}=1}{q^{\prime}\neq q}}^{d}\sum_{|\alpha|,|\beta|=1}\frac{1}{\alpha !\beta!}\frac{ \partial^{\alpha}\varrho}{\partial x^{\alpha}}(x,t)\frac{ \partial^{\beta}\upsilon_{q}}{\partial  x^{\beta}}(x,t)h_{N,q,q^{\prime}}^{\alpha+\beta}+\sum_{|\alpha|,|\beta|=1}\frac{1}{\alpha !\beta!}\frac{ \partial^{\alpha}\varrho}{\partial x^{\alpha}}(x,t)\frac{ \partial^{\beta}\upsilon_{q}}{\partial  x^{\beta}}(x,t)h_{N,q,q^{\prime}}^{\alpha+\beta}\\
&+\sum_{q^{\prime}=1}^{d}\sum_{|\alpha|,|\beta|=2}^{2R+3}\frac{1}{\alpha !\beta!}\frac{ \partial^{\alpha}\varrho}{\partial x^{\alpha}}(x,t)\frac{ \partial^{\beta}\upsilon_{q}}{\partial  x^{\beta}}(x,t)h_{N,q,q^{\prime}}^{\alpha+\beta}\\
=&\sum_{\stackrel{q^{\prime}=1}{q^{\prime}\neq q}}^{d}\nabla^{q}\varrho(x,t)\nabla^{q^{\prime}}\upsilon_{q^{\prime}}(x,t)+\sum_{\stackrel{q^{\prime}=1}{q^{\prime}\neq q}}^{d}\nabla^{q^{\prime}}\varrho(x,t)\nabla^{q}\upsilon_{q^{\prime}}(x,t)+\sum_{\stackrel{l=1}{l\neq q}}^{d}\nabla^{l}\varrho(x,t)\nabla^{l}\upsilon_{q}(x,t)\\
&+3\nabla^{q}\varrho(x,t)\nabla^{q}\upsilon_{q}(x,t)+\sum_{q^{\prime}=1}^{d}\sum_{|\alpha|,|\beta|=2}^{2R+3}\frac{1}{\alpha !\beta!}\frac{ \partial^{\alpha}\varrho}{\partial x^{\alpha}}(x,t)\frac{ \partial^{\beta}\upsilon_{q}}{\partial  x^{\beta}}(x,t)h_{N,q,q^{\prime}}^{\alpha+\beta}\\
\end{align*}
\begin{align*}
=&\sum_{q^{\prime}=1}^{d}\nabla^{q}\varrho(x,t)\nabla^{q^{\prime}}\upsilon_{q^{\prime}}(x,t)+\sum_{q^{\prime}=1}^{d}\nabla^{q^{\prime}}\varrho(x,t)\nabla^{q}\upsilon_{q^{\prime}}(x,t)+\sum_{q^{\prime}=1}^{d}\nabla^{q^{\prime}}\varrho(x,t)\nabla^{q^{\prime}}\upsilon_{q}(x,t)\\
&+\sum_{q^{\prime}=1}^{d}\sum_{|\alpha|,|\beta|=2}^{2R+3}\frac{1}{\alpha !\beta!}\frac{ \partial^{\alpha}\varrho}{\partial x^{\alpha}}(x,t)\frac{ \partial^{\beta}\upsilon_{q}}{\partial  x^{\beta}}(x,t)h_{N,q,q^{\prime}}^{\alpha+\beta}.
\end{align*}
Then we deduce
\begin{align}\label{2eqintegrmatri}
&-\int_{\mathbb{R}^{d}}  \varrho(y, t) \zeta_{N, q q^{\prime}}(x-y)\left(\upsilon_{q^{\prime}}(x, t)-\upsilon_{q^{\prime}}(y, t)\right)dy\nonumber\\
=&\nabla^{q}\varrho(x,t)\nabla^{q^{\prime}}\upsilon_{q^{\prime}}(x,t)+\frac{1}{2}\varrho(x,t)\nabla^{q}\nabla^{q^{\prime}}\upsilon_{q^{\prime}}(x,t)+\nabla^{q^{\prime}}\varrho(x,t)[\nabla^{q^{\prime}}\upsilon_{q}(x,t)+\nabla^{q}\upsilon_{q^{\prime}}(x,t)]\nonumber\\
&+\frac{1}{2}\varrho(x,t)\left[\nabla^{q^{\prime}}\nabla^{q^{\prime}}\upsilon_{q}(x,t)+\nabla^{q^{\prime}}\nabla^{q}\upsilon_{q^{\prime}}(x,t)\right]+\sum_{|\alpha|=3}^{2R+3}\frac{1}{\alpha!}\varrho(x,t)\frac{ \partial^{\alpha}\upsilon_{q^{\prime}}}{x^{\alpha}}(x,t)h_{N,q,q^{\prime}}^{\alpha}\nonumber\\
&+\sum_{|\alpha|,|\beta|=2}^{2R+3}\frac{1}{\alpha !\beta!}\frac{ \partial^{\alpha}\varrho}{\partial x^{\alpha}}(x,t)\frac{ \partial^{\beta}\upsilon_{q}}{\partial  x^{\beta}}(x,t)h_{N,q,q^{\prime}}^{\alpha+\beta}\nonumber\\
&+\sum_{|\alpha|=2R+4}\sum_{|\beta|=1}^{2R+3}\frac{1}{\beta !}\frac{\partial^{\beta}\upsilon_{q^{\prime}}}{\partial x^{\beta}}(x,t)\int_{\R^{d}}R_{\varrho,\alpha}(y)(y-x)^{\alpha+\beta}\zeta_{N, q q^{\prime}}(x-y)dy\nonumber\\
&+\sum_{|\alpha|,|\beta|=2R+4}\int_{\R^{d}}R_{\varrho,\alpha}(y)R_{\upsilon,\beta}(y)(y-x)^{\alpha+\beta}\zeta_{N, q q^{\prime}}(x-y)dy\nonumber\\
&+\sum_{|\alpha|=0}^{2R+3}\sum_{|\beta|=2R+4}\frac{1}{\alpha !}\frac{ \partial^{\alpha}\varrho}{\partial x^{\alpha}}(x,t)\int_{\R^{d}}R_{\upsilon,\beta}(y)(y-x)^{\alpha+\beta}\zeta_{N, q q^{\prime}}(x-y)dy\nonumber\\
=&\frac{1}{2\varrho(x,t)}\nabla^{q}\left(\varrho(x,t)^{2}\nabla^{q^{\prime}}\upsilon_{q^{\prime}}(x,t)\right)+\frac{1}{2\varrho(x,t)}\nabla^{q^{\prime}}\left(\varrho(x,t)^{2}\left[\nabla^{q^{\prime}}\upsilon_{q}(x,t)+\nabla^{q}\upsilon_{q^{\prime}}(x,t)\right]\right)\nonumber\\
&+\sum_{|\alpha|=3}^{2R+3}\frac{1}{\alpha!}\varrho(x,t)\frac{ \partial^{\alpha}\upsilon_{q^{\prime}}}{ \partial x^{\alpha}}(x,t)h_{N,q,q^{\prime}}^{\alpha}+\sum_{|\alpha|,|\beta|=2}^{2R+3}\frac{1}{\alpha !\beta!}\frac{ \partial^{\alpha}\varrho}{\partial x^{\alpha}}(x,t)\frac{ \partial^{\beta}\upsilon_{q}}{\partial  x^{\beta}}(x,t)h_{N,q,q^{\prime}}^{\alpha+\beta}\nonumber\\
&+\sum_{|\alpha|=2R+4}\sum_{|\beta|=1}^{2R+3}\frac{1}{\beta !}\frac{\partial^{\beta}\upsilon_{q^{\prime}}}{\partial x^{\beta}}(x,t)\int_{\R^{d}}R_{\varrho,\alpha}(y)(y-x)^{\alpha+\beta}\zeta_{N, q q^{\prime}}(x-y)dy\nonumber\\
&+\sum_{|\alpha|,|\beta|=2R+4}\int_{\R^{d}}R_{\varrho,\alpha}(y)R_{\upsilon,\beta}(y)(y-x)^{\alpha+\beta}\zeta_{N, q q^{\prime}}(x-y)dy\nonumber\\
&+\sum_{|\alpha|=0}^{2R+3}\sum_{|\beta|=2R+4}\frac{1}{\alpha !}\frac{ \partial^{\alpha}\varrho}{\partial x^{\alpha}}(x,t)\int_{\R^{d}}R_{\upsilon,\beta}(y)(y-x)^{\alpha+\beta}\zeta_{N, q q^{\prime}}(x-y)dy.
\end{align}

By \eqref{2eqintegrmatri} we deduce
\begin{align*}
&R_{N,3}\nonumber\\
=&\dfrac{2}{N}\sum_{k=1}^{N}(V_{t,q}^{k,N}-\upsilon_{q}(X_{t}^{k,N},t))\left(\sum_{|\alpha|=3}^{2R+3}\frac{1}{\alpha!}\varrho(X_{t}^{k,N},t)\frac{ \partial^{\alpha}\upsilon_{q^{\prime}}}{x^{\alpha}}(X_{t}^{k,N},t)h_{N,q,q^{\prime}}^{\alpha}\right.\\
&+\sum_{|\alpha|,|\beta|=2}^{2R+3}\frac{1}{\alpha !\beta!}\frac{ \partial^{\alpha}\varrho}{\partial x^{\alpha}}(X_{t}^{k,N},t)\frac{ \partial^{\beta}\upsilon_{q}}{\partial  x^{\beta}}(X_{t}^{k,N},t)h_{N,q,q^{\prime}}^{\alpha+\beta}\nonumber\\
&+\sum_{|\alpha|=2R+4}\sum_{|\beta|=1}^{2R+3}\frac{1}{\beta !}\frac{\partial^{\beta}\upsilon_{q^{\prime}}}{\partial x^{\beta}}(X_{t}^{k,N},t)\int_{\R^{d}}R_{\varrho,\alpha}(y)(y-X_{t}^{k,N})^{\alpha+\beta}\zeta_{N, q q^{\prime}}(X_{t}^{k,N}-y)dy\nonumber\\
&+\sum_{|\alpha|,|\beta|=2R+4}\int_{\R^{d}}R_{\varrho,\alpha}(y)R_{\upsilon,\beta}(y)(y-X_{t}^{k,N})^{\alpha+\beta}\zeta_{N, q q^{\prime}}(X_{t}^{k,N}-y)dy\nonumber\\
&\left.+\sum_{|\alpha|=0}^{2R+3}\sum_{|\beta|=2R+4}\frac{1}{\alpha !}\frac{ \partial^{\alpha}\varrho}{\partial x^{\alpha}}(X_{t}^{k,N},t)\int_{\R^{d}}R_{\upsilon,\beta}(y)(y-X_{t}^{k,N})^{\alpha+\beta}\zeta_{N, q q^{\prime}}(X_{t}^{k,N}-y)dy\right).
\end{align*}
By simple calculation we have 
\begin{align*}
 &\left|\int_{\R^{d}}R_{\varrho,\alpha}(y)(y-X_{t}^{k,N})^{\alpha+\beta}\zeta_{N, q q^{\prime}}(X_{t}^{k,N}-y)dy\right|\\
 &\leq\int_{\R^{d}}|R_{\varrho,\alpha}(y)||(y-X_{t}^{k,N})^{\alpha+\beta}\zeta_{N, q q^{\prime}}(X_{t}^{k,N}-y)|dy\\
&\leq C\int_{\R^{d}}|(y-X_{t}^{k,N})^{\alpha+\beta}\zeta_{N, q q^{\prime}}(X_{t}^{k,N}-y)|dy,
\end{align*}
\begin{align*}
 &\left|\int_{\R^{d}}R_{\upsilon,\beta}(y)(y-X_{t}^{k,N})^{\alpha+\beta}\zeta_{N, q q^{\prime}}(X_{t}^{k,N}-y)dy\right|\\
 &\leq\int_{\R^{d}}|R_{\upsilon,\beta}(y)||(y-X_{t}^{k,N})^{\alpha+\beta}\zeta_{N, q q^{\prime}}(X_{t}^{k,N}-y)|dy\\
 &\leq C\int_{\R^{d}}|(y-X_{t}^{k,N})^{\alpha+\beta}\zeta_{N, q q^{\prime}}(X_{t}^{k,N}-y)|dy,
\end{align*}
\begin{align*}
&\left|\int_{\R^{d}}R_{\varrho,\alpha}(y)R_{\upsilon,\beta}(y)(y-X_{t}^{k,N})^{\alpha+\beta}\zeta_{N, q q^{\prime}}(X_{t}^{k,N}-y)dy\right|\\
&\leq\int_{\R^{d}}|R_{\varrho,\alpha}(y)R_{\upsilon,\beta}(y)||(y-X_{t}^{k,N})^{\alpha+\beta}\zeta_{N, q q^{\prime}}(X_{t}^{k,N}-y)|dy\\
&\leq C\int_{\R^{d}}|(y-X_{t}^{k,N})^{\alpha+\beta}\zeta_{N, q q^{\prime}}(X_{t}^{k,N}-y)|dy.
\end{align*}

Doing   variable change we obtain 

\begin{eqnarray*}
    &&\int_{\R^{d}}|(y-X_{t}^{k,N})^{\alpha+\beta}\zeta_{N, q q^{\prime}}(X_{t}^{k,N}-y)|dy=\int_{\R^{d}}|y|^{\alpha+\beta}|\zeta_{N, q q^{\prime}}(y)|dy\\
    &&\qquad=N^{4\gamma/d}(2\pi)^{-d/2}\int_{\R^{d}}|y|^{\alpha+\beta}y_{q}y_{q^{\prime}}e^{-|N^{\gamma/d}y|^{2}}N^{\gamma}dy\\
    &&\qquad=N^{\gamma/d(2-|\alpha+\beta|)}(2\pi)^{-d/2}\int_{\R^{d}}|N^{\gamma/d}y|^{\alpha+\beta}N^{\gamma/d}y_{q}N^{\gamma/d}y_{q^{\prime}}e^{-|N^{\gamma/d}y|^{2}}N^{\gamma}dy\\
    &&\qquad=N^{\gamma(2-|\alpha+\beta|)/d}(2\pi)^{-d/2}\int_{\R^{d}}|x|^{\alpha+\beta}x_{q}x_{q^{\prime}}e^{-|x|^{2}}dx.
    \end{eqnarray*}

	Then we deduce 

		\begin{eqnarray}
        |R_{N,3}|&\leq& CN^{-\gamma/d}\dfrac{2}{N}\sum_{k=1}^{N}|V_{t}^{k,N}-\upsilon(X_{t}^{k,N},t)|\nonumber\\
        &\leq& CN^{-\gamma/d}\left(\sum_{k=1}^{N}\dfrac{|V_{t}^{k,N}-\upsilon(X_{t}^{k,N},t)|^{2}}{N}\right)^{1/2}\left(\sum_{k=1}^{N}\dfrac{1}{N}\right)^{1/2}\nonumber\\
        &\leq&CN^{-\gamma/d}\left(1+\frac{1}{N}\sum_{k=1}^{N}|V_{t}^{k,N}-\upsilon(X_{t}^{k,N},t)|^{2}\right)^{1/2}\nonumber\\
        &\leq&CN^{-\gamma/d}\left(1+\frac{1}{N}\sum_{k=1}^{N}|V_{t}^{k,N}-\upsilon(X_{t}^{k,N},t)|^{2}\right)\nonumber\\
        &\leq&C\left(N^{-\gamma/d}+\frac{1}{N}\sum_{k=1}^{N}|V_{t}^{k,N}-\upsilon(X_{t}^{k,N},t)|^{2}\right).
    \end{eqnarray}

\end{proof}

\section*{Acknowledgements}

Author Jesus Correa  has received research grants from CNPq
through the grant  $141464/2020-8$.Author Christian Olivera is partially supported by FAPESP by the grant  $2020/04426-6$,  by FAPESP-ANR by the grant Stochastic and Deterministic Analysis for Irregular Models$-2022/03379-0$ and  CNPq by the grant $422145/2023-8$.
\newpage

\end{document}